\begin{document}
\renewcommand{\baselinestretch}{1.025}
\newcommand{\nc}[2]{\newcommand{#1}{#2}}
\newcommand{\coc}{{\mathrm{co}C}}

\newtheorem{prop}{Proposition}[section]
\newtheorem{cor}[prop]{Corollary}
\newtheorem{thm}[prop]{Theorem}
\newtheorem{lem}[prop]{Lemma}
\newtheorem{dfn}[prop]{Definition}
\newtheorem{exam}[prop]{Example}
\newtheorem{rem}[prop]{Remark}

\newcommand{\rnc}[2]{\renewcommand{#1}{#2}}
\rnc{\[}{\begin{equation}}
\rnc{\]}{\end{equation}}
\nc{\wegengruen}{\end{equation}}
\numberwithin{equation}{section}

\def\CC{C}
\def\C{{\mathbb C}}
\def\N{{\mathbb N}}
\def\R{{\mathbb R}}
\def\Z{{\mathbb Z}}
\def\cO{{\mathcal O}}
\def\cK{{\mathcal K}}
\def\H{{\mathscr H}}
\def\cH{{\mathcal H}}
\def\cP{{\mathcal P}}
\def\T{{\mathcal T}}
\def\B{{\mathcal B}}
\def\I{{\mathcal I}}
\def\P{\mathcal{P}_{\mathrm{U}(1)}}
\def\PW{P}
\def\lP{P}
\def\PWG{\mathcal{P}_{\!\delta}}
\def\PWU{\mathcal{P}_{\!\Delta}}
\def\barP{\bar P}
\nc{\ha}{\mbox{$\alpha$}}
\nc{\hb}{\mbox{$\beta$}}
\nc{\hg}{\mbox{$\gamma$}}\nc{\hc}{\mbox{$\gamma$}}
 \nc{\hd}{\mbox{$\delta$}}
\nc{\he}{\mbox{$\varepsilon$}}
\nc{\hz}{\mbox{$\zeta$}}
\nc{\s}{\mbox{$\sigma$}}
\nc{\hk}{\mbox{$S$}}     
\nc{\hm}{\mbox{$\mu$}}
\nc{\hn}{\mbox{$\nu$}}
\nc{\hl}{\mbox{$\lambda$}}
\nc{\hG}{\mbox{$\Gamma$}}
\nc{\hD}{\mbox{$\Delta$}}
\nc{\hT}{\mbox{$\Theta$}}
\nc{\ho}{\mbox{$\omega$}}
\nc{\hO}{\mbox{$\Omega$}}
\nc{\hp}{\mbox{$\pi$}}
\nc{\hP}{\mbox{$\Pi$}} 
\nc{\vare}{\mbox{$\varepsilon$}}

\newcommand{\co}{{\mathrm{co}}}
\renewcommand{\S}{{s}}
\newcommand{\U}{u}
\newcommand{\V}{u}
\newcommand{\K}{\mathrm{K}}
\newcommand{\ip}[1]{\langle#1\rangle}

\nc{\hs}{\hspace{1pt}} \nc{\hsp}{\hspace{-1pt}}
\nc{\SUq}{\mbox{$\cO(\mathrm{SU}_q(2))$}}
\nc{\CSUq}{\mbox{$\CC(\mathrm{SU}_q(2))$}}
\nc{\OU}{\mbox{$\cO(\mathrm{U}(1))$}}
\nc{\rmU}{\mbox{$\mathrm{U}(1)$}}
\def\Sq{\mbox{$\cO(\mathrm{S}_{q}^2)$}}
\def\Dq{\mbox{$\cO(\mathrm{D}_{q})$}}
\def\rmD{\mbox{$\mathrm{D}$}}
\def\CSq{\mbox{$\CC(\mathrm{S}_{q}^2)$}}
\def\S{\mbox{$\mathrm{S}$}}
\nc{\lN}{{\ell}_2(\N)}
\def\im{{\mathrm{i}}}
\def\CS{\mbox{$\CC(\mathrm{U}(1))$}}
\def\lra{\longrightarrow}
\def\ra{\rightarrow}
\def\id{{\mathrm{id}}}
\def\Mat{{\mathrm{Mat}}}
\def\tr{{\mathrm{Tr}}}
\def\ker{{\mathrm{ker}}}
\nc{\lin}{\mbox{$\mathrm{span}$}}
\nc{\pink}{\mbox{$\Pi_{k=1}^{n}$}}
\newcommand{\can}{\mathsf{can}}
\newcommand{\tcan}{\widetilde{\mathsf{can}}}
\newcommand{\hrho}{\hat{\rho}}
\newcommand{\ov}{\overline}
\newcommand{\srep}{*-re\-pre\-sen\-ta\-tion}
\nc{\inc}{\subseteq}
\def\ot{\otimes}
\newcommand{\LAblp}{\mbox{\LARGE\boldmath$($}}
\newcommand{\LAbrp}{\mbox{\LARGE\boldmath$)$}}
\newcommand{\Lblp}{\mbox{\Large\boldmath$($}}
\newcommand{\Lbrp}{\mbox{\Large\boldmath$)$}}
\newcommand{\lblp}{\mbox{\large\boldmath$($}}
\newcommand{\lbrp}{\mbox{\large\boldmath$)$}}
\newcommand{\blp}{\mbox{\boldmath$($}}
\newcommand{\brp}{\mbox{\boldmath$)$}}
\newcommand{\LAlp}{\mbox{\LARGE $($}}
\newcommand{\LArp}{\mbox{\LARGE $)$}}
\newcommand{\Llp}{\mbox{\Large $($}}
\newcommand{\Lrp}{\mbox{\Large $)$}}
\newcommand{\llp}{\mbox{\large $($}}
\newcommand{\lrp}{\mbox{\large $)$}}
\newcommand{\Cc}{c}
\newcommand{\Dd}{d}
\newcommand{\pr}{\mathrm{pr}}
\newcommand{\barot}{\hs\bar\otimes\hs}

\hyphenation{equi-va-ri-ance equi-va-ri-ant geo-me-try ope-ra-tor
ope-ra-tors Pod-le}
%

\title{\vspace*{-15mm}\Large\bf
THE PULLBACKS OF PRINCIPAL COACTIONS}
\author{
\vspace*{-0mm}\large\sc
Piotr M.~Hajac\\
\vspace*{-0mm}\large
Instytut Matematyczny, Polska Akademia Nauk\\
\vspace*{-0mm}\large
ul.\ \'Sniadeckich 8, Warszawa, 00-956 Poland
\vspace{0mm}\\
\large\sl
http://www.impan.pl/$\!\widetilde{\phantom{m}}\!$pmh\\
\vspace*{0mm}\large
and\\
\vspace*{-0mm}\large
Katedra Metod Matematycznych Fizyki, Uniwersytet Warszawski\\
\vspace*{-0mm}\large
ul.\ Ho\.za 74, Warszawa, 00-682 Poland\\
\vspace*{0mm}\and
\vspace*{-0mm}\large\sc
Elmar Wagner\\
\vspace*{-0mm}\large
Instituto de F\'isica y Matem\'aticas\\
\vspace*{-0mm}\large
Universidad Michoacana de San Nicol\'as de Hidalgo\\
\vspace*{-0mm}\large
 Edificio C-3, Cd.~Universitaria,
58040 Morelia, Michoac\'an, M\'exico\\
\vspace{-0mm}\large\sl
e-mail: elmar@ifm.umich.mx 
}
\date{}
\maketitle

{\vspace*{-1mm}\noindent\small {\bf Abstract:}
We prove that the class of principal coactions is closed under one-surjective
pullbacks in an appropriate category of algebras equipped with left and right coactions.
This allows us to handle cases of $C^*$-algebras lacking two different non-trivial ideals. It also allows us to go beyond the category of
comodule algebras.
As an example of the former, we carry out an index computation for noncommutative line bundles over
the standard Podle\'s sphere using the Mayer-Vietoris type arguments afforded by
a one-surjective pullback presentation of the $C^*$-algebra of this quantum sphere. To instantiate the latter, we define a family of
coalgebraic noncommutative  deformations of the $\mathrm{U}(1)$-principal bundle 
$\mathrm{S}^7\rightarrow\mathbb{C}\mathrm{P}^3$.
}
{\vspace*{-2mm}\footnotesize\tableofcontents}

\normalsize
\section{Introduction and preliminaries}

The idea of decomposing a complicated object into simpler pieces and
connecting data is a fundamental computational principle throughout
mathematics. In the case of (co)homology theory, it yields the 
Mayer-Vietoris long exact sequence whose significance and usefulness
can hardly be overestimated. The categorical underpinning of all this
are pullback diagrams: in a given category they give a rigorous meaning
to putting together two objects over a third one.

The goal of this paper is to prove a general pullback theorem for 
principal coactions that
significantly generalizes the main result of \cite{hkmz}
restricted to comodule algebras and pullbacks of surjections.
More precisely, our main result is that the pullback of principal
coactions over morphisms of which at least one is surjective is again
a principal coaction. It may be viewed as a non-linear version
 of the Milnor construction yielding an odd-to-even connecting 
homomorphism in algebraic $K$-theory~\cite{mi}.
 Indeed, linearizing our pullback theorem
with the help of a corepresentation gives
precisely the odd-to-even construction of a projective module defining
 the connecting homomorphism in $K$-theory. 

We apply this new result in two cases. In the first case, we keep
the comodule-algebra setting but take a one-surjective pullback diagram
(only one of the defining morphisms is surjective). In the second case,
we proceed the other way round, that is, we take a pullback diagram
given by two surjections but take coactions that are not algebra
homomorphisms.

The pullback picture of the standard quantum Hopf fibration gives
us the first-case example. It provides a new way of computing the 
index pairing for the associated quantum Hopf line bundles 
(cf.~\cite{w-e}). This index pairing  was computed
in \cite{h-pm00} using a noncommutative index formula, and
re-derived in~\cite{nt05}. Here we give yet another method to compute it.
This simple example shows the need to generalize from two-surjective
to one-surjective pullback diagrams, and the pullback
method of index computation seems attractive due to its inherent
simplicity.

To obtain the second-case example, we first show how the piecewise
structure \cite{hkmz} 
of a noncommutative join construction \cite{dhh} allows one to define
a certain  class of piecewise principal coactions. 
Although this class of examples can also be handled by earlier methods,
it definitely shows that there are interesting piecewise principal
coactions that are not algebra homomorphisms.
To obtain a concrete example, we take
 Pflaum's instanton bundle $\mathrm{S}^7_q\rightarrow
\mathrm{S}^4_q$ \cite{p-mj94} as 
the noncommutative join 
 of $\mathrm{SU}_q(2)$  and turn it into the coalgebraic
quantum principal bundle \mbox{$\mathrm{S}^7_q\rightarrow \C \mathrm{P}^3_{q,s}$}.
We do it with the help
of the canonical surjections 
$\pi :\mathcal{O}(\mathrm{SU}_q(2))\rightarrow 
\mathcal{O}(\mathrm{SU}_q(2))/J_{q,s}$
determined by the coideals right ideals
$J_{q,s}:=
(\cO(\mathrm{S}_{q,s}^2)\cap \mathrm{ker}\hs \vare)\mathcal{O}(\mathrm{SU}_q(2))$,
where $\mathrm{S}_{q,s}^2$ is a generic Podle\'s quantum sphere
\cite{p-p87} and $\mathrm{ker}\hs\vare$ is the kernel of the counit map.
\\

The paper is organized as follows. First, to make our exposition 
self-contained and to establish notation, we recall fundamental
concepts that we use later on. The key Section~2 is devoted to
the general pullback theorem for principal coactions of coalgebras
on algebras,  Section~3 is on deriving the index pairing
 for quantum Hopf line bundles as a corollary to the pullback
 presentation of the standard Hopf fibration of~$\mathrm{SU}_q(2)$,
and the final Section~4 presents new examples of piecewise principal
coactions that go beyond Hopf-Galois theory.

Throughout the paper, we work with algebras and coalgebras over a field.
The unadorned tensor product stands for the algebraic tensor product over this field. 
We employ the Heyneman-Sweedler type notation 
(with the summation symbol suppressed) for the comultiplication
$\Delta(c)\!=\!c_{(1)}\otimes c_{(2)}\in C\otimes C$
and for coactions
$\Delta_V(v)=v_{(0)}\otimes v_{(1)}\in V\otimes C$,
${}_V\Delta(v)=v_{(-1)}\otimes v_{(0)}\in C\otimes V$. The convolution
product of two linear maps from a coalgebra to an algebra is denoted
by $*$: $(f*g)(c):=f(c_{(1)})g(c_{(2)})$. The set of natural numbers 
includes $0$, that is, $\N=\{0,1,2,\ldots \}$.

\subsection{Pullback diagrams and fibre products}\label{sec-fp}

The purpose of this section is to collect some elementary facts about 
fibre products. We  consider the category 
of vector spaces as it will be the ambient category for all
our pullback diagrams. Let 
$\pi_1: A_1 \rightarrow A_{12}$ and  $\pi_2:A_2\rightarrow
A_{12}$ be linear maps. The {\em fibre product} of these
maps is defined by
\begin{equation}\label{A}
A_1\underset{(\pi_1,\pi_2)}{\times} A_2:=
\left\lbrace (a_1,a_2)\in A_1\times A_2 \;|\; \pi_1(a_1)=\pi_2(a_2)
\right\rbrace.
\end{equation}
Together with the canonical projections
\begin{equation}
\mathrm{pr}_1:A_1\underset{(\pi_1,\pi_2)}{\times} A_2\longrightarrow A_1,
\qquad
\mathrm{pr}_2:A_1\underset{(\pi_1,\pi_2)}{\times} A_2\longrightarrow A_2
\end{equation}
it forms a universal construction completing the  initially
given two linear maps into
 the following commutative diagram:
\begin{equation}\label{A_is_fibre_product}
        \begin{CD}
    {A_1\underset{(\pi_1,\pi_2)}{\times} A_2} @ >{\mathrm{pr}_2}>> {A_2} @.\\
    @ V{\mathrm{pr}_1} VV @ V{\pi_2} VV @.\\
    {A_1} @ >{\pi_1} >> {A_{12}} @. \ .\\
        \end{CD}
\end{equation}
Such  diagrams are called {\em pullback diagrams},
and fibre products are often referred to as pullbacks.  

Next, if $\pi_1: A_1 \rightarrow A_{12}$ and  $\pi_2:A_2\rightarrow
A_{12}$ are morphisms of *-algebras, then the fibre product 
$A_1{\times}_{(\pi_1,\pi_2)} A_2$ is a *-subalgebra
of $A_1\times A_2$.
Furthermore, if we consider the pullback diagram 
\eqref{A_is_fibre_product} in
the category of (unital) $C^*$-algebras, then 
$A_1{\times}_{(\pi_1,\pi_2)} A_2$
with its componentwise multiplication
and *-structure is a (unital) $C^*$-algebra.
Much the same, if $B$ is an algebra and $\pi_1: A_1 \rightarrow
A_{12}$ and  $\pi_2:A_2\rightarrow A_{12}$ are morphisms of left
$B$-modules, then the fibre product
$A_1{\times}_{(\pi_1,\pi_2)} A_2$ is a left $B$-module
via the componentwise left action $b.(a_1,a_2)=(b. a_1,b. a_2)$.

\subsection{Odd-to-even connecting homomorphism in $K$-theory}\label{sec-mv}

Consider a pullback diagram  
\begin{equation}
\mbox{$\xymatrix@=5mm{& & A \ar[lld]
\ar[rrd] & &\\
A_1 \ar@{>>}[drr]_{\pi_1}& & & &A_2 \ar[dll]^{\pi_2}\\
&& A_{12} &&}$}
\end{equation}
in the category of unital
algebras, and assume that  one of the defining
morphisms (here we choose~$\pi_1$) is surjective.
Then there exists a
long exact sequence in algebraic $K$-theory \cite{mi}
\begin{equation} 
\xymatrixcolsep{4pc}\xymatrix{
\cdots\longrightarrow {\K}_1(A_{12}) \  
\ar[r]^{\mbox{ }\hspace{-64pt}\text{odd-to-even}} 
& \ 
{\K}_0(A)\longrightarrow {\K}_0(A_1\oplus A_2)\longrightarrow {\K}_0 (A_{12}).
}
\end{equation}

The mapping 
$\xymatrix{{\K}_1(A_{12})  
\ar[rr]^{\mbox{ }\hspace{-2pt}\text{odd-to-even}} & & {\K}_0(A) }$ 
is obtained as follows. 
First, given left $A_i$-modules $E_i$, $i=1,2$, we obtain
  left $A_{12}$-modules
$\pi_{i\ast}E_i$ defined by $A_{12}\otimes_{A_i} E_i$. 
Since $A_{12}$ is unital,
there are canonical morphisms $\pi_{i\ast}:E_i\ra\pi_{i\ast}E_i$,
$\pi_{i\ast}(e)=1\otimes_{A_i} e$.
The modules $E_i$ and $\pi_{i\ast}E_i$ can also be  considered
as  left modules over the fibre product algebra
$A$ via the left actions given by
$a.e_i= {\mathrm{pr}_i}(a).e_i$, for $e_i\in E_i$, and
$a.f_i= \pi_i({\mathrm{pr}_i}(a)).f_i$, for $f_i\in \pi_{i\ast}E_i$.
Assume now that
$h:\pi_{1\ast}E_1\ra\pi_{2\ast}E_2$ is a morphism
of left $A_{12}$-modules.
Then $h\circ\pi_{1\ast}: E_1\ra\pi_{2\ast}E_2$ and
$\pi_{2\ast}: E_2\ra\pi_{2\ast}E_2$ can be lifted to morphisms
of left $A$-modules, and we can consider their pullback
diagram in the category of left $A$-modules:
\begin{equation}\label{fpMod}
\xymatrix{
& E_1 \,{\underset{(h\circ\pi_{1\ast} ,\pi_{2\ast})}{\times}}\, 
E_2\ar[dl]_{\mathrm{pr}_1} \ar[dr]^{\mathrm{pr}_2}& \\
E_1 \ar[d]_{\pi_{1\ast}} &    &  E_2 \ar[d]^{\pi_{2\ast}}\\
\pi_{1\ast}E_1\ar[rr]_{h} & & \pi_{2\ast}E_2\,. }
\end{equation}

In \cite[Section 2]{mi}, it is proven in detail that, if $E_i$ 
is a finitely generated projective module over $A_i$, $i=1,2$,
and $h$ is an isomorphism, 
then the fibre product 
$M:= E_1\times_{(h\circ\pi_{1\ast}\hs,\,\pi_{2\ast})}E_2$ 
is a finitely generated projective $A$-module. 
Furthermore, up to isomorphism, every finitely generated
projective module over $A$ has this form, and the $A_i$-modules 
$E_i$ and \mbox{${\mathrm{pr}_{i\ast}}M:=A_i\otimes_A M$}, $i=1,2$, 
are naturally isomorphic. In particular, if
$E_1\cong A_1^n$ and $E_2\cong A_2^n$, 
the isomorphism 
$h:\pi_{1\ast}E_1\ra\pi_{2\ast}E_2$ is given by an 
invertible matrix $U\in \mathrm{GL}_n(A_{12})$. 
Using the canonical embedding 
$\mathrm{GL}_n(A_{12})\subseteq\mathrm{GL}_\infty(A_{12})$, 
we get a map 
\begin{equation}\label{UM}
 \mathrm{GL}_\infty(A_{12})\ni U\longmapsto M\in \mathrm{Proj}(A)
\end{equation}
given by the pullback diagram 
\begin{equation}
\mbox{$\xymatrix@=5mm{& & M \ar[lld]
\ar[rrd] & &\\
A_1^n \ar@{>>}[drr]_{\pi_1}& & & &A_2^n \ar[dll]^{\pi_2}\\
&& A_{12}^n\stackrel{U}{\cong}A_{12}^n\ . &&}$}
\end{equation}

This map induces 
 an odd-to-even connecting homomorphism on the level of 
both algebraic \cite{mi} and $C^*$-algebraic \cite{hrz}
$K$-theory.
An explicit description of the module $M$ is as follows. 
Assume that $\pi_1:A_1\ra A_{12}$ is surjective.
Then there exist liftings ${\Cc},{\Dd}\in\Mat_n(A_{1})$ such
that evaluating $\pi_1$ on ${\Cc}$ and $\Dd$ componentwise
yields $U^{-1}$ and $U$ respectively.
Applying \cite[Theorem~2.1]{DHHM} to our situation yields 
$E_1\times_{(h\circ\pi_{1\ast}\hs,\,\pi_{2\ast})}E_2\cong A^{2n}p$,
where
\begin{equation}\label{p}
 p=
  \left(
  \begin{array}{cc}
  ({\Cc}(2-{\Dd}{\Cc}){\Dd},1 ) & ({\Cc}(2-{\Dd}{\Cc})(1-{\Dd}{\Cc}),0 ) \cr
  ((1-{\Dd}{\Cc}){\Dd},0 ) & ((1-{\Dd}{\Cc})^2,0  )
  \end{array}
  \right)\in \Mat_{2n}(A).
 \end{equation}

\subsection{Principal coactions and associated projective modules}
\label{Pe}

Recall first the general definition of an entwining structure.
Let $C$ be a coalgebra with  comultiplication $\hD$ and 
counit $\he$, and let $A$ be an algebra with  multiplication $m$ 
and  unit $\eta$. 
A~linear map
\begin{equation}
\psi:C\otimes A\lra A\otimes C
\end{equation}
is called an {\em  entwining structure} if and only if it is unital,
counital, and distributive
with respect to both the multiplication and comultiplication:
\begin{align}\label{entwining}      
&\psi\circ(\id\ot m)=
(m\ot \id)\circ (\id \ot \psi)\circ (\psi \ot \id),
\qquad \psi\circ(\id\ot\eta)=(\eta\ot\id)\circ\text{\rm flip},\\
\label{e2}
&(\id\ot\Delta)\circ\psi=
(\psi \ot \id)\circ (\id \ot \psi)\circ (\Delta\ot \id),
\qquad (\id\ot\he)\circ\psi=\text{\rm flip}\circ(\he\ot\id) .
\end{align}
If $\psi$ is an entwining of a coalgebra $C$ and an algebra $A$, 
and $M$ is
a right $C$-comodule and a right $A$-module, 
we call $M$ an \emph{entwined
module} \cite{b-t99} when
it satisfies the compatibility condition
\[\label{entmod}
(ma)_{(0)}\otimes (ma)_{(1)}=m_{(0)}\psi(m_{(1)}\otimes a).
\]

Next, 
let $P$ be an algebra equipped with a coaction $\Delta_P:P\ra P\ot C$
of a coalgebra $C$.
Define the coaction-invariant subalgebra of $P$ by
\begin{equation}\label{coinvar}
B:=P^{\co C}:=
\{b\in P \;|\; \Delta_P(bp)=b\Delta_P(p)\ \,\text{for all}\ \hs p\in P\}.
\end{equation}
We call the inclusion $B\inc P$ a $C$-extension.
We call it a {\em coalgebra-Galois $C$-extension} 
 when the
canonical left $P$-module right $C$-comodule map
\begin{equation}  \label{can}
\can: P\underset{B}{\ot}P{\lra} P\ot C,\quad p\underset{B}{\ot}
 p'\longmapsto p\hD_P(p'),
\end{equation}
is bijective~\cite{bh99}. Note that the bijectivity of 
$\can$ allows us to define the so-called
translation map 
\[
 \tau : C\lra P \underset{B}{\otimes} P, \quad \tau(c) 
:= \can^{-1}(1\ot c). 
\]
Moreover, every coalgebra-Galois $C$-extension
comes naturally equipped with a unique entwining structure
that makes $P$ a $(P,C)$-entwined module in the sense of~\eqref{entmod}. 
It is called the
canonical entwining structure~\cite{bh99}, and is very useful
in calculations or further constructions. 
Explicitly, it can be written as: 
\[       \label{can-ent}                         
\psi(c\otimes p)= \can(\can^{-1}(1\otimes c)p).
\]

An algebra $P$ with a right $C$-coaction $\hD_P$  
is said to be {\em $e$-coaugmented}
if and only if there exists a group-like element $e \in C$ such that
$\Delta_P (1) = 1 \otimes e$. We call the $C$-extension
\mbox{$B:=P^{\co C}\inc P$} $e$-coaugmented.
(Much the same way, one
defines the coaugmentation of left coactions.)
For the $e$-coaugmented coalgebra-Galois $C$-extensions, 
one can show that the coaction-invariant
subalgebra defined in \eqref{coinvar} can be expressed as 
\begin{equation}\label{ecoinvar}
P^{\co C}=\{p\in P\;|\;\hD_P(p)=p\ot e\}.
\end{equation}
Indeed, Formula~\eqref{can-ent}
allows us to express the right coaction in terms of the entwining
\[\label{iii}
\hD_P(p)=\psi(e\ot p),
\] 
and Equation~\eqref{entwining} yields the right-in-left inclusion. 
The opposite inclusion is obvious.

Next, if $\psi$ is invertible, 
one can use \eqref{e2} to show that the formula 
\begin{equation}\label{leftco}
_P\hD(p):=\psi^{-1}(p\ot e)
\end{equation}
defines a left coaction $_P\hD:P\ra C\ot P$. 
We define the left coaction-invariant subalgebra ${}^{\co C}\!P$ as in 
\eqref{coinvar}, and derive the left-sided version of \eqref{can-ent}. 
Hence, for any $e$-coaugmented coalgebra-Galois $C$-extension
 with {\em invertible canonical entwining},  the right coaction-invariant
subalgebra  coincides with the left coaction-invariant subalgebra:
\[  \label{lr}
 P^{\co C}=\{p\in P\;|\;\hD_P(p)=p\ot e\}
 =\{p\in P\;|\;{}_P\hD(p)=e\ot p\}={}^{\co C}\!P.
\]

Finally, we need to assume  one more condition on $C$-extensions to obtain
a suitable definition of a principal coaction:  
{\em equivariant projectivity}. 
It is a pivotal property that
guarantees the projectivity  of associated modules, 
and thus leads to index pairings
between $K$-theory and $K$-homology. 
Putting together the aforementioned four
conditions, we say that a coalgebra
  $C$-extension $B\inc P$ is {\em principal}~\cite{bh04}
if:
\begin{enumerate}
\item[(i)] The canonical map $\can: P\ot_BP{\ra} P\ot C$, 
$p\ot_B p'\mapsto p\hD_P(p')$, 
is bijective 
(Galois condition).
\item[(ii)] The right coaction is $e$-coaugmented for some group-like 
$e\in C$, i.e.\ $\Delta_P(1)=1\ot e$.
\item[(iii)]
The canonical entwining
$\psi:C\otimes P{\ra} P\ot C$, $c\otimes p\mapsto
\can(\can^{-1}(1\otimes c)p)$,  is bijective.
\item[(iv)]
The algebra $P$ is $C$-equivariantly projective as a left $B$-module,
i.e.\ there exists a  left $B$-linear and right $C$-colinear
splitting of the multiplication map
$B \otimes P \rightarrow P$.
\end{enumerate}

In the framework of coalgebra extensions, the role 
of connections on principal bundles 
is played by strong connections~\cite{bh04}. 
Let $P$ be an algebra and both a left and right $e$-coaugmented $C$-comodule. 
(Note that the left and right coactions need not commute.)
A \emph{strong connection} is  a  linear map
$\ell : C \rightarrow P \otimes P$
satisfying
\begin{equation}\label{sc}
\tcan \circ \ell\!=\!1 \otimes \id,
\quad
(\mathrm{id} \otimes \Delta_P) \circ
\ell \!=\! (\ell \otimes \mathrm{id}) \circ \Delta,
\quad
({}_P \Delta \otimes \mathrm{id}) \circ
\ell\! =\! (\mathrm{id} \otimes \ell) \circ
\Delta,
\quad
\ell(e)=1\ot 1.
\end{equation}
Here
$\tcan : P \otimes P \rightarrow P \otimes C$
is the  lifting of $\can$ to $P \otimes P$.
Assuming that there exists an invertible entwining $\psi: C\ot P\ra P\ot C$ making $P$ an entwined module, 
the first three equations of \eqref{sc} read 
in the Heyneman-Sweedler type notation 
$c \mapsto \ell(c)^{\langle 1 \rangle} \otimes
 \ell(c)^{\langle 2 \rangle}$ as follows:
\begin{align}                                                       
\label{el1}
&\ell(c)^{\langle 1 \rangle}
\psi(e\otimes {\ell(c)^{\langle 2 \rangle}})
=\ell(c)^{\langle 1 \rangle}{\ell(c)^{\langle 2 \rangle}}_{(0)}\otimes
{\ell(c)^{\langle 2 \rangle}}_{(1)}=1\otimes c,\\
\label{elleft}
&\ell(c)^{\langle 1 \rangle}\otimes \psi(e\otimes{\ell(c)^{\langle 2 \rangle}})
= \ell(c)^{\langle 1 \rangle}\otimes {\ell(c)^{\langle 2 \rangle}}_{(0)}\otimes
{\ell(c)^{\langle 2 \rangle}}_{(1)}
=\ell(c_{(1)})^{\langle 1 \rangle}\otimes \ell(c_{(1)})^{\langle 2 \rangle}
\otimes c_{(2)},\\                                                  \label{elright}
&\psi^{-1}({\ell(c)^{\langle 1 \rangle}}\otimes e)
\otimes \ell(c)^{\langle 2 \rangle}
= {\ell(c)^{\langle 1 \rangle}}_{(-1)} \otimes {\ell(c)^{\langle 1 \rangle}}_{(0)} 
\otimes \ell(c)^{\langle 2 \rangle}
=c_{(1)}\otimes\ell(c_{(2)})^{\langle 1 \rangle}\otimes
\ell(c_{(2)})^{\langle 2 \rangle}.                                 
\end{align}
Applying $\id\otimes\varepsilon$ to \eqref{el1} yields a useful
formula
\begin{equation}\label{elco}
\ell(c)^{\langle 1 \rangle}{\ell(c)^{\langle 2 \rangle}}=\varepsilon(c).
\end{equation}

It is worthwhile to observe the left-right symmetry of principal extensions. 
We already noted  (see \eqref{lr}) the equality of the left and right coaction-invariant 
subalgebras. Now let us define the left canonical map as 
\[
\can_L : P\underset{B}{\ot} P \ni p\ot q \longmapsto p_{(-1)} \ot p_{(0)}q\in C\ot P. 
\]
One can check that it 
is related to the right canonical map $\can$ by the formula \cite{bhems} 
\[
   \psi\circ \can_L = \can.
\]
Also, if $\ell$ is a strong connection and $\tcan_L := (\id\ot m)\circ ({}_P\Delta\ot\id)$ is 
the lifted left canonical map, then $\tcan_L\circ\ell=\id\ot 1$. Hence 
\[  \label{C1}
c\ot p \longmapsto \ell(c)^{\ip{1}}\ot \ell(c)^{\ip{2}}p  
\]
is a splitting of $\tcan_L$ just as 
\[  \label{C2}
  p\ot c\longmapsto p\hs \ell(c)^{\ip{1}}\ot \ell(c)^{\ip{2}}
\]
is a splitting of $\tcan$. 

\begin{lem} \label{L1}
Let $P$ be an object in the category ${}_e^C\!\mathbf{Alg}_e^C$ of all unital algebras with 
$e$-coaugmented left and right $C$-coactions. Assume that there exists an invertible entwining $\psi: C\ot P\ra P\ot C$ making $P$ an entwined module. Then, if $P$ admits a strong connection $\ell$,  it is principal.
\end{lem}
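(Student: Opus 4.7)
The plan is to verify in turn the four defining conditions of principality, using the strong connection $\ell$ as the essential tool. Condition (ii), $e$-coaugmentation, is already built into the definition of the ambient category ${}_e^C\!\mathbf{Alg}_e^C$, so only (i), (iii) and (iv) have to be proved. I would handle them in the order (iv), (i), (iii), because the splitting produced in (iv) feeds directly into the proof of (i), and (iii) drops out almost immediately once $\can$ is invertible.

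For equivariant projectivity (iv), my candidate splitting of the multiplication $m:B\ot P\ra P$ is $s(p):=p_{(0)}\hs\ell(p_{(1)})^{\ip{1}}\ot\ell(p_{(1)})^{\ip{2}}$. Formula~\eqref{elco} gives $m\circ s=\id$ at once, \eqref{iii} together with $b\in B$ yields left $B$-linearity, and \eqref{elleft} yields right $C$-colinearity. The delicate step is to show that the first tensor factor actually lies in $B$. To do this I would first apply $\psi$ to~\eqref{elright} to rewrite it as $\psi(c_{(1)}\ot\ell(c_{(2)})^{\ip{1}})\ot\ell(c_{(2)})^{\ip{2}}=\ell(c)^{\ip{1}}\ot e\ot\ell(c)^{\ip{2}}$, then substitute $c=p_{(1)}$, invoke coassociativity, and combine with the entwined-module identity $\hD_P(qp)=q_{(0)}\psi(q_{(1)}\ot p)$ to conclude that $\hD_P(p_{(0)}\ell(p_{(1)})^{\ip{1}})=p_{(0)}\ell(p_{(1)})^{\ip{1}}\ot e$. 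By~\eqref{ecoinvar} this puts the first factor inside $B$.

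For the Galois condition (i), I would introduce $\omega:P\ot C\ra P\ot_B P$ by $\omega(p\ot c):=p\hs\ell(c)^{\ip{1}}\ot_B\ell(c)^{\ip{2}}$, following~\eqref{C2}. That $\can\circ\omega=\id$ is immediate from~\eqref{el1}. The other direction, $\omega\circ\can=\id$, yields the expression $pq_{(0)}\ell(q_{(1)})^{\ip{1}}\ot_B\ell(q_{(1)})^{\ip{2}}$; by (iv) the factor $q_{(0)}\ell(q_{(1)})^{\ip{1}}$ sits in $B$, so one may slide it across $\ot_B$ and contract it with $\ell(q_{(1)})^{\ip{2}}$ via~\eqref{elco} to recover $p\ot_B q$. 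Condition (iii) is then essentially automatic: inserting $\can^{-1}(1\ot c)=\ell(c)^{\ip{1}}\ot_B\ell(c)^{\ip{2}}$ into the defining formula~\eqref{can-ent}, applying the entwined-module property to $\ell(c)^{\ip{2}}p$, and using~\eqref{el1} once more collapses the whole expression to $\psi(c\ot p)$. Hence the canonical entwining equals $\psi$ and inherits its invertibility.

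The main obstacle is the membership $p_{(0)}\ell(p_{(1)})^{\ip{1}}\in B$ in step~(iv); this is exactly the point at which the non-commutativity of the left and right $C$-coactions on $P$ has to be handled, and one needs the full strength of the invertible entwining together with the left-colinearity axiom of $\ell$ to push the calculation through.
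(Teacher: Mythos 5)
Your proposal is correct and follows essentially the same route as the paper: the splitting $p\mapsto p_{(0)}\ell(p_{(1)})^{\ip{1}}\otimes\ell(p_{(1)})^{\ip{2}}$ for equivariant projectivity (with the same entwining computation forcing the first leg into $B$), and the map $p\otimes c\mapsto p\,\ell(c)^{\ip{1}}\otimes_B\ell(c)^{\ip{2}}$ inverted against $\can$ by sliding the coinvariant factor across $\otimes_B$ and using \eqref{elco}. The only cosmetic difference is condition (iii): you verify directly via \eqref{can-ent}, the entwined-module property and \eqref{el1} that the canonical entwining coincides with the given invertible $\psi$, whereas the paper simply invokes the uniqueness of the entwining making $P$ an entwined module; both arguments are sound and equivalent in substance.
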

\begin{proof}
Following \cite{bh04}, first we argue that 
\begin{equation}\label{ei1}
	\s : P\ni p \longmapsto p_{(0)}\ell(p_{(1)})^{\langle 1 \rangle}
	\otimes \ell(p_{(1)})^{\langle 2 \rangle} \in
	B \otimes P
\end{equation}
is a left $B$-linear
splitting of the multiplication map.
Indeed, $m\circ \s=\id$ because of \eqref{elco}, and the calculation 
\[
\psi( e\ot p_{(0)}\ell(p_{(1)})^{\langle 1 \rangle})\ot \ell(p_{(1)})^{\langle 2 \rangle}
=p_{(0)}\ell(p_{(1)})^{\langle 1 \rangle} \ot e \ot \ell(p_{(1)})^{\langle 2 \rangle}
\]
obtained  using \eqref{entwining} proves that $\s(P)\subseteq B\ot P$. 
This splitting is evidently right $C$-colinear, so that its existence proves the equivariant projectivity.  

Next, let us check that the formula
\begin{equation}\label{ei0}
\can^{-1} : P \otimes C \longrightarrow P \underset{B}{\otimes} P,\quad
	p \otimes c \longmapsto p\ell(c)^{\langle 1 \rangle} 
\underset{B}{\otimes} \ell(c)^{\langle 2 \rangle},
\end{equation}
defines the inverse of the canonical map $\can$, so that the
coaction of $C$ is Galois.
It follows from \eqref{el1} that 
\[
\can(\can^{-1} (p\ot c)) 
=   p\hs \ell(c)^{\hsp\langle 1 \rangle} {\ell(c)^{\hsp\langle 2 \rangle}}_{\!(0)} 
\ot {\ell(c)^{\hsp\langle 2 \rangle}}_{\!(1)}
=p\ot c\,.
\]
On the other hand, taking advantage of \eqref{elco} and \eqref{ei1},  we see that 
\[
\can^{-1} (\can (p \underset{B}{\ot} q)) 
= p q_{(0)}\ell(q_{(1)} )^{\hsp\langle 1 \rangle}\underset{B}{\ot}\ell(q_{(1)} )^{\hsp\langle 2 \rangle}
= p\underset{B}{\ot} q_{(0)}\ell(q_{(1)} )^{\hsp\langle 1 \rangle}\ell(q_{(1)} )^{\hsp\langle 2 \rangle}
=p\underset{B}{\ot} q\, .
\]
Thus  the conditions (i) and (iv) of the principality of a $C$-extension are satisfied. Finally, Condition~(ii) is simply assumed, and 
Condition~(iii) follows from the uniqueness of an entwining that makes $P$ an entwined module. 
\end{proof}

Note that, if there exists a strong connection $\ell$, then  \eqref{ei0} yields 
\[                                           \label{tell}
 \tau(c) =  \ell(c)^{\langle 1 \rangle} 
\underset{B}{\otimes} \ell(c)^{\langle 2 \rangle}. 
\]
In the Heyneman-Sweedler type notation, we write $\tau(c) = \tau(c)^{[1]} \ot_B \tau(c)^{[2]}$. 
Then the canonical entwining reads 
\[                                         \label{psil}
 \psi(c\ot p)= \tau(c)^{[1]} (\tau(c)^{[2]}\hs p)_{(0)} \ot (\tau(c)^{[2]}\hs p)_{(1)}
=\ell(c)^{\ip{1}} (\ell(c)^{\ip{2}}\hs p)_{(0)} \ot (\ell(c)^{\ip{2}}\hs p)_{(1)}. 
\]

\begin{rem}\em
In \cite{bh04}, there is the converse statement: if $P$ is principal, it admits a strong
connection. Thus
principal extensions can be characterized as these that admit a strong
connection.
\end{rem}

Recall now that classical principal bundles can be viewed as functors
transforming finite-dimensional vector spaces into associated
vector bundles.
Analogously, one can prove that a principal $C$-extension $B\inc P$
defines a functor from the category of finite-dimensional left
$C$-comodules into the category of finitely generated projective
left $B$-modules~\cite{bh04}. Explicitly, if $V$ is
 a left $C$-comodule  with coaction ${}_V\Delta$, this functor
assigns to it the cotensor product
\begin{equation}
 P\,\underset{C}{\Box}\,V:=\{ \mbox{\small$\sum_i$}\; p_i\otimes v_i \in 
P\otimes V\;|\;
 \mbox{\small$\sum_i$}\;
 \Delta_P(p_i)\otimes v_i= \mbox{\small$\sum_i$}\; p_i\otimes {}_V\Delta(v_i)\}.
\end{equation}
In particular, if $g\in C$ is a group-like element,
the formula ${}_{\C}\Delta(1):=g\hs\otimes\hs 1$ defines
a 1-dimen\-sional corepresentation, and
\[\label{line}
 P\,\hs\underset{C}{\Box}\,\hs\C\hsp=\hsp\{ p\hsp\in\hsp P \;|\; 
\Delta_P(p)\!=\!p\otimes g\} =: P_g
\]
can be viewed as a noncommutative associated complex line bundle.
Then
the general formula for computing an idempotent $E_g$
of the associated module $P_g$
out of a corepresentation and a strong
connection becomes a very simple special case of 
\cite[Theorem~3.1]{bh04}: 
\[\label{ie}
P_g\cong B^n E_g\,,\quad \left(E_g\right)_{i,j=1}^n:=\left(g^R_i \hs g^L_j\right)_{i,j=1}^n,
\quad \ell(g)=:\sum_{k=1}^n g^L_k\otimes g^R_k
\in P_{g^{-1}}\otimes P_g\, .
\]

A fundamental special case of principal extensions is provided
by {\em principal comodule algebras}. One assumes
then that $C=H$ is a Hopf algebra with  bijective antipode~$\hk$,
the canonical map is bijective, and $P$ is an $H$-equivariantly
projective left $B$-module. This brings us in touch with 
compact quantum groups. Assume that $\bar H$ is the 
$C^*$-algebra of a compact quantum group in the sense
of Woronowicz \cite{w-sl87,w-sl98}, and
$H$ is its dense Hopf *-subalgebra
spanned by the matrix coefficients of the irreducible
unitary corepresentations. Let $\bar P$ be a unital $C^*$-algebra
and $\delta:\bar P\rightarrow  \bar P\barot \bar H$
an injective $C^*$-algebraic right coaction of $\bar H$ on $\bar P$. 
(See \cite[Definition 0.2]{bs93} for a general definition and
  \cite[Definition 1]{b-fp95} for the special case of compact quantum 
groups.)
Here $\barot$ denotes the minimal $C^*$-completion of
the algebraic tensor product $\bar P\otimes \bar H$.

To extend Woronowicz's Peter-Weyl theory \cite{w-sl98} from compact
quantum groups to compact quantum principal bundles, 
one  defines \cite{bh}
the subalgebra $\PWG(\bar P)\subseteq \bar P$ of elements for
which the coaction lands in $\bar P \otimes H$, i.e.\
\begin{equation}
\PWG(\bar P):=\{p\in \bar P\;|\;\delta(p)\in P\otimes H\}.
\end{equation}
One easily checks that it is an $H$-comodule algebra. 
We call $\PWG(\bar P)$ the {\em Peter-Weyl comodule algebra}
 associated to the $C^*$-coaction~$\hd$.
It follows from results of
\cite{b-fp95} and \cite{p-p95} that $\PWG(\bar P)$ is dense in $\bar P$.
Also, it is straightforward to verify  \cite{bh} that the
operation $\bar P \mapsto \PWG(\bar P)$ gives
 a functor commuting with
taking fibre products (pullbacks), and 
that $\PWG(\bar P)^{\co H}$ coincides with the $C^*$-algebra 
$\bar P^{\co \bar H}$.

Finally, let us remark that, for  a compact Hausdorff topological
group $G$ and a unital $C^*$-algebra $A$, 
we can use the isomorphism $A\,\bar \ot \,\CC(G)\cong \CC(G,A)$ 
(e.g.~see \cite[Corollary~T.6.17]{w-ne93})
to translate a right  $\CC(G)$-coaction $\delta$
into a  $G$-action 
$\chi\colon G\ni g\mapsto \chi_g\in \mathrm{Aut}(A)$ as follows:
\[\label{aut}
\delta\colon A\longrightarrow A\,\bar{\otimes}\, C(G)\cong C(G,A),
\quad \delta(a)(g)=:\chi_g(a)\,.
\]
Thus we can use the terminology of
right $\CC(G)$-comodule $C^*$-algebras and 
$G$-$C^*$-algebras synonymously. It is important to bear in mind that
the Peter-Weyl functor maps
$G$-equivariant *-homomorphisms to colinear homomorphisms of 
right $\cO(G)$-comodule algebras~\cite{bh}.

\subsection{Standard Hopf fibration of quantum SU(2)}            
\label{sPs}

The standard quantum Hopf fibration is given by an action of
$\mathrm{U}(1)$ on the quantum group $\mathrm{SU}_q(2)$,
$q\in(0,1)$.
The
 coordinate ring of $\SUq$
is generated by
$\ha$, $\hb$, $\hc$, $\hd$ with relations
\begin{align}
& \ha \hb =q \hb \ha,\quad \ha \hc =q\hc \ha,\quad \hb \hd = q \hd \hb,\quad
\hc \hd = q \hd \hc, \quad \hb \hc =\hc \hb,\\
& \ha \hd - q \hb \hc = 1, \quad  \hd \ha - q^{-1} \hb \hc = 1,
\end{align}
and involution $\ha^*=\hd$, $\hb^*=-q\hc$.
It is a Hopf *-algebra with
comultiplication $\Delta$, counit $\varepsilon$, and antipode
$\hk$ given by
\begin{align}
& \Delta(\ha) = \ha \otimes \ha + \hb \otimes \hc, \quad
\Delta(\hb) = \ha \otimes \hb + \hb \otimes \hd,\\
& \Delta(\hc) = \hc \otimes \ha + \hd \otimes \hc, \quad
\Delta(\hd) = \hc \otimes \hb + \hd \otimes \hd,   \\
& \varepsilon(\ha)=\varepsilon(\hd)=1, \quad
\varepsilon(\hb)=\varepsilon(\hc)=0,\\
& \hk (\ha)=\hd,\quad \hk (\hb)=-q^{-1}\hb,
\quad \hk (\hc)=-q \hc,\quad
\hk (\hd)=\ha.
\end{align}

Let $\OU$ denote the commutative and cocommutative Peter-Weyl Hopf *-algebra 
of $\mathrm{U}(1)$, and let $\V$ stand for its unitary group-like generator. 
Note that the counit $\vare$  and the antipode $\hk$
satisfy $\vare({\V})=1$ and $\hk({\V})={\V}^{*}$. 
There is a Hopf *-algebra surjection
\[    \label{Hsur}
\pi:\SUq\lra \OU, \quad \pi(\ha)={\V},\ \ \pi(\hd)={\V}^{-1},\ \ 
\pi(\hb)=\pi(\hc)=0.
\]
Setting $\Delta_R:=(\id\otimes\pi)\circ \Delta$, we
obtain a right \OU-coaction on \SUq.
On generators, the coaction reads
\[
 \Delta_R(\ha)\hsp =\hsp\ha\otimes {\V}, \quad\!
\Delta_R(\hb) \hsp=\hsp  \hb\otimes {\V}^{-1},\quad\!
 \Delta_R(\hc) \hsp=\hsp  \hc\otimes {\V}, \quad\!
\Delta_R(\hd) \hsp=\hsp \hd \otimes {\V}^{-1}.
\]
The *-subalgebra of coaction invariants
defines the coordinate ring of the
standard Podle\'s quantum sphere:
\begin{equation}\label{SqInSUq}
\Sq:=\SUq^{{\co \cO(\mathrm{U}(1))}}
=\left\lbrace a\in\SUq \;|\; \Delta_R(a)=a \otimes 1\right\rbrace.
\end{equation}

One can prove (see~\cite{p-p87}) that $\Sq$ is
 isomorphic to the *-algebra
generated by $B$ and  hermitean  $A$
satisfying the relations 
\[\label{podles}
AB=q^{2}BA,\quad B^{*}B=A-A^{2},\quad BB^{*}=q^{2}A-q^{4}A^{2}.
\]
An isomorphism  is explicitly given by the formulas
$A=-q^{-1}\hb\hc$ and $B=-\hb\ha$.
The irreducible Hilbert space representations of \Sq\ are given by
\begin{align}
&\rho_0(A)=\rho_0(B)=0,\quad \rho_0(1)=1\quad \mbox{on}\ \, \H=\C,\\
&\rho_+(A)e_n= q^{2n}e_n,\quad \rho_+(B)e_n=q^{n}(1-q^{2n})^{1/2}e_{n-1}
\quad \mbox{on}\ \, \H=\lN,
\end{align}
where $\left\lbrace e_n \;|\; n=0,1,\ldots\right\rbrace$ is an
orthonormal basis of $\lN$.

Recall that the universal $C^*$-algebra
of a complex *-algebra is
the $C^*$-completion with respect to the
universal $C^*$-norm given by the
supremum (if it exists) of the operator norms over all bounded
*-representations.
Let $\CSq$ denote the universal $C^*$-algebra generated by
$A$ and $B$ satisfying~\eqref{podles}.
From the above representations, it follows
that $\CSq$ is the minimal unitalization of $\cK(\lN)$, that is, 
\begin{gather}\label{CSq}
\CSq\,\cong\, \cK(\lN) \oplus\C\,\subseteq\, \B(\lN), \\ (k+\ha)(k'+\ha')=(kk'+ \ha' k + \ha k')+ \ha\ha', 
\quad  k,k'\in \cK(\lN),\ \, \ha,\ha'\in\C. \label{g}
\end{gather}
Here $\cK(\lN)$ and $\B(\lN)$ denote the 
$C^*$-algebras of compact and bounded
operators  respectively on the Hilbert space~$\lN$.
The isomorphism \eqref{CSq} implies that ${\K}_0(\CSq)\cong\Z\oplus\Z$,
where one generator of $K$-theory is given by the class of the unit 
$1\in\CSq$,
and the other by the class of the 1-dimensional projection
onto $\C e_0\subseteq \lN$.

Furthermore, ${\K}^{0}(\CSq)\cong\Z\oplus\Z$. 
We identify one generator of $K$-homology with the class of
the pair of representations $[(\id,\vare)]$,
where $\id(k+\alpha)=k+\alpha$ and
$\vare(k+\alpha)=\alpha$ for all $k\in \cK(\lN)$ and $\ha \in\C$.
The other generator can be given by the class
of the pair of representations $[(\vare,\vare_0)]$
with the (non-unital) representation $\vare_0$ of $\cK(\lN) \oplus\C$
defined by $\vare_0(k+\alpha)= \alpha {\S}{\S}^{*}$, where
\begin{equation}\label{S}
{\S}:\lN\lra\lN, \quad {\S}e_n=e_{n+1},
\end{equation}
denotes the unilateral shift on $\lN$.
(See \cite{MNW1} for a detailed treatment of the $K$-homology
and $K$-theory of Podle\'s quantum spheres.)

We shall also consider the coordinate ring of the quantum disc $\Dq$
generated by $z$ and $z^{*}$ with the relation
\begin{equation}
z^* z -q^2 z z^* = 1- q^2.
\end{equation}
Its bounded irreducible Hilbert space representations
are given by
\begin{align}
&\mu_\theta(z)={\mathrm e}^{\im\theta}\quad \mbox{on}\ \, \H=\C,
\quad \theta\in[0,2\pi),\\
&\mu(z)e_n= (1-q^{2(n+1)})^{1/2}e_{n+1}
\quad \mbox{on}\ \, \H=\lN. \label{mu}
\end{align}
It has been shown in \cite{kl93}
that the universal $C^*$-algebra of $\Dq$ is isomorphic to
the Toeplitz algebra given as the $C^*$-algebra
generated by the unilateral shift ${\S}$ of Equation~\eqref{S}.
The representation $\mu$ defines then an embedding
of $\Dq$ into~$\T$.

Let $\CS$ denote the $C^*$-algebra of continuous 
functions on the unit circle $\mathrm{S}^1$,
and let ${\U}$ be its unitary generator. 
The Toeplitz algebra gives rise to
the following short exact sequence of $C^*$-algebras:
\begin{equation}\label{sesToeplitz}
 0\,\lra\, \cK(\lN)\,\lra\, \T\, 
{\overset{\sigma}{\lra}}\,\, \CS\,\lra\, 0.
\end{equation}
Here the so-called symbol map $\sigma :\T\ra\CS$ is given by
$\sigma({\S})={\U}$. Since ${\S}-\mu(z)$ belongs to $\cK(\lN)$, 
it follows in particular
that $\sigma(\mu(z))={\U}$.

Now let us consider the associated quantum line bundles
as finitely generated projective modules.
They are defined by the 1-dimen\-sional
corepresentations  
$\C\ni 1\mapsto {\V}^{N}\otimes 1$, $N\in\Z$, as cotensor
products~\eqref{line}:
\begin{equation} \label{MsubN}
M_N:= \{ p\in\SUq \;|\; \Delta_R(p)=p\otimes {\V}^{N}\}.
\end{equation}
Since $\Delta_R$ is a morphism of algebras, $M_N$ is
an \Sq-bimodule. Our next step is to determine explicitly
projections describing these projective modules.

For $l\in\frac{1}{2}{\N}$ and $i,j=-l,-l+1,\ldots, l$,
let $t^{l}_{i,j}$ denote the matrix elements of the irreducible unitary
corepresentations of \SUq, that is, 
\begin{equation}\label{matcoef}
\Delta(t^{l}_{i,j})=\sum_{k=-l}^{l} t^{l}_{i,k}\otimes t^{l}_{k,j}\,, \qquad
\sum_{k=-l}^{l} t^{l*}_{k,i}\hs t^{l}_{k,j}=\sum_{k=-l}^{l}
 t^{l}_{i,k}\hs t^{l*}_{j,k}=\delta_{ij}\,.
\end{equation}
By the Peter-Weyl theorem for compact quantum groups~\cite{W},
$\SUq=\oplus_{l\in\frac{1}{2}{\N}}\oplus_{i,j=-l}^{l}\C\hs t^{l}_{i,j}$.
From the explicit description of $t^{l}_{i,j}$ \cite[Section~4.2.4]{KS} and the definition
of $\Delta_R$, it follows that 
$\Delta_R(t^{l}_{i,j})=t^{l}_{i,j} \otimes {\V}^{-2j}$,
so that $t^{l}_{i,-j}\in M_{2j}$.
It can be shown \cite{HM,SW} that
$t^{|j|}_{i,-j}$, $i=-|j|,\ldots, |j|$ generate $M_{2j}$ as a left
 \Sq-module and
$M_{2j}\cong \Sq^{2|j|+1} E_{2j}$ for all $j\in\frac{1}{2}\Z$, where
\begin{equation}                                                       \label{EN}
 E_{2j} =
  \begin{pmatrix}
   t^{|j|}_{-|j|,-j} \\
   \vdots \\
t^{|j|}_{|j|,-j}
  \end{pmatrix}
\begin{pmatrix}
   t^{|j|*}_{-|j|,-j}\,, &\!\!
   \cdots\,, &\!\!
t^{|j|*}_{|j|,-j}
  \end{pmatrix}
\in\Mat_{2|j|+1}(\Sq).
\end{equation}
It is clear that $E_{2j}^{2}=E_{2j}$ due to \eqref{matcoef}
and $E_{2j}^{*}=E_{2j}$, so that $E_{2j}$ is a projection.

\section{Principality of one-surjective pullbacks}\label{fppe}

We begin by defining an ambient category for
pullback diagrams appearing in the second part of this section. 
Let $P$ be a unital algebra equipped with both a right 
coaction $\Delta_P:P\ra P\ot C$ and a left coaction
${}_P\Delta:P\ra C\ot P$ of the same coalgebra~$C$.
We do {\em not} assume that these coactions commute, but we
do assume that they are coaugmented by the same group-like
element $e\in C$, i.e., $\Delta_P(1)=1\ot e$ and ${}_P\Delta(1)=e\ot 1$.
For a fixed  coalgebra $C$ and a group-like $e\in C$, we consider
the category ${}_e^C\!\mathbf{Alg}_e^C$ of all such unital algebras with 
$e$-coaugmented left and right $C$-coactions. 
Here morphisms are bicolinear
algebra homomorphisms.

Since we work
over a field, this category is evidently closed under any pullbacks.
If 
$\pi_1:~P_1~\rightarrow~P_{12}$
and  $\pi_2:P_2\rightarrow P_{12}$ are morphisms in 
${}_e^C\!\mathbf{Alg}_e^C$,
then the fibre product algebra $P:=P_1{\times}_{(\pi_1,\pi_2)} P_2$
becomes a right $C$-comodule via
\begin{equation}
\Delta_P(p,q) = ({p}_{(0)},0)\otimes {p}_{(1)}
+(0,{q}_{(0)})\otimes {q}_{(1)},
\end{equation}
and a left $C$-comodule via
\begin{equation}
{}_P \Delta(p,q) = {p}_{(-1)}\otimes ({p}_{(0)},0)
+{q}_{(-1)}\otimes(0,{q}_{(0)}).
\end{equation}
Also, it is clear that $\Delta_P(1,1) = (1,1)\otimes e$
and ${}_P \Delta(1,1) = e\otimes(1,1)$.

\subsection{Principality of images and preimages}

In the following lemma, we prove that any surjective morphism in 
${}_e^C\!\mathbf{Alg}_e^C$
whose domain is a principal extension can be split by a left colinear 
map and by a right colinear map (not necessarily by a bicolinear map). 
Note that 
the first part of the lemma is proved much the same way as in the
Hopf-Galois case \cite[Lemma~3.1]{hkmz}: 

\begin{lem}\label{fer}
Let 
$ \pi : P \rightarrow Q$ be a surjective
morphism in
the category ${}_e^C\!\mathbf{Alg}_e^C$ of  unital algebras with 
$e$-coaugmented left and right $C$-coactions.
If $P$ is principal, then:
\vspace*{-1mm}\begin{enumerate}
\item[(i)] The induced map
$\pi^\coc : P^\coc \rightarrow Q^\coc$ is surjective.
\item[(ii)] There exists a unital right $C$-colinear splitting of~$\pi$.
\item[(iii)] There exists a unital left $C$-colinear splitting of~$\pi$. 
\item[(iv)] $Q$ is principal.
\end{enumerate}
Furthermore, if $Q^{\prime}\in {}_e^C\!\mathbf{Alg}_e^C$, \,$Q^{\prime}\subseteq Q$, 
is principal, then so is $\pi^{-1}(Q^{\prime})$. 
\end{lem}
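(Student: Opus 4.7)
My plan is to establish the four numbered parts in the order (iv), (ii)--(iii), (i), and then handle the final statement about~$\pi^{-1}(Q^{\prime})$. The cornerstone is principality of~$Q$: by the remark following Lemma~\ref{L1}, $P$ admits a strong connection $\ell:C\to P\ot P$, and I propose $\ell_Q:=(\pi\ot\pi)\circ\ell$ as a strong connection on~$Q$. The four axioms in~\eqref{sc} transfer directly because $\pi$ is a unital bicolinear algebra homomorphism; for instance, $\ell_Q(e)=1\ot 1$ uses unitality of~$\pi$, and the lifting axiom $\tcan_Q\circ\ell_Q=1\ot\id$ uses both multiplicativity and right colinearity of~$\pi$. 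To apply Lemma~\ref{L1}, I still need an invertible entwining on~$Q$ making it entwined. Formula~\eqref{psil} shows $\psi_P(c\ot p)\in\ker\pi\ot C$ whenever $p\in\ker\pi$, since $\ker\pi$ is a right $C$-subcomodule and a two-sided ideal in~$P$; the symmetric argument using~\eqref{leftco} descends $\psi_P^{-1}$, so the induced map $\psi_Q$ is a well-defined invertible entwining making $Q$ entwined, and Lemma~\ref{L1} gives~(iv).

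For the colinear splittings in~(ii) and~(iii), I fix a unital linear section $s_0:Q\to P$ of~$\pi$ and twist it into a colinear section using the strong connection on~$P$. Concretely, I propose
\[
 s(q) := s_0\bigl(q_{(0)}\hs\pi(\ell(q_{(1)})^{\ip{1}})\bigr)\cdot\ell(q_{(1)})^{\ip{2}},
\]
so that $\pi(s(q))=q_{(0)}\he(q_{(1)})=q$ by~\eqref{elco}, and $s(1)=1$ because $\ell(e)=1\ot 1$. The essential check is right colinearity $\Delta_P\circ s=(s\ot\id)\circ\Delta_Q$, where the identities~\eqref{el1} and~\eqref{elleft} are deployed together with the entwined-module identity $(ab)_{(0)}\ot(ab)_{(1)}=a_{(0)}\hs\psi_P(a_{(1)}\ot b)$ to re-index the Sweedler legs so that the non-colinearity of~$s_0$ is absorbed by the strong-connection axioms. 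The left colinear splitting in~(iii) is produced by the mirror-image formula using the left-canonical splitting~\eqref{C1} and the identity~\eqref{elright}. Part~(i) is then immediate from~(ii): given $q\in Q^{\coc}$, $\Delta_P(s(q))=(s\ot\id)(q\ot e)=s(q)\ot e$, so $s(q)\in P^{\coc}$ is the desired lift.

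For the final statement, first note that $\pi$ restricts to a surjection $\pi^{-1}(Q^{\prime})\twoheadrightarrow Q^{\prime}$ and that $\pi^{-1}(Q^{\prime})$ is a subobject of~$P$ in~${}_e^C\mathbf{Alg}_e^C$ containing~$\ker\pi$. To produce a strong connection on~$\pi^{-1}(Q^{\prime})$, I would combine the strong connection~$\ell_{Q^{\prime}}$ on~$Q^{\prime}$ with the right and left colinear splittings $s,t:Q\to P$ from~(ii)--(iii); since $s(Q^{\prime})$ and~$t(Q^{\prime})$ lie in~$\pi^{-1}(Q^{\prime})$ by~(i), lifting $\ell_{Q^{\prime}}$ factor-wise via~$t$ and~$s$ produces a candidate $\ell:C\to\pi^{-1}(Q^{\prime})\ot\pi^{-1}(Q^{\prime})$, which must then be corrected by a term absorbing the multiplicativity defect of~$s,t$ in order to satisfy~\eqref{sc}. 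Once the strong connection is in hand, the canonical-entwining formula~\eqref{psil} defines an entwining on~$\pi^{-1}(Q^{\prime})$ whose invertibility follows from the left-sided version~\eqref{leftco} applied to~$\pi^{-1}(Q^{\prime})$, and Lemma~\ref{L1} concludes the argument.

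I expect the main obstacle to lie in the Sweedler bookkeeping for right colinearity of~$s$ in~(ii), where the failure of $s_0$ to be colinear must be absorbed entirely by the strong-connection identities; a subtler obstacle is the construction of the strong connection on~$\pi^{-1}(Q^{\prime})$, since the naive formula $(t\ot s)\circ\ell_{Q^{\prime}}$ satisfies the lifting axiom only up to a term measuring the multiplicativity defect of the splittings, and that term must be cancelled by an additional correction whose explicit shape is not obvious.
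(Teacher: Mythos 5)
Your part (iv) is essentially sound and close to the paper's own route ($(\pi\ot\pi)\circ\ell$ as a strong connection on $Q$, plus descent of $\psi_P^{\pm1}$ to $Q$ because $\ker\pi$ is a two-sided ideal and a sub-bicomodule). The genuine gap is in (ii), and it propagates. With an \emph{arbitrary} unital linear section $s_0$ of $\pi$, the map $s(q)=s_0\bigl(q_{(0)}\hs\pi(\ell(q_{(1)})^{\ip{1}})\bigr)\ell(q_{(1)})^{\ip{2}}$ does split $\pi$ (by \eqref{elco}), but it is not right colinear in general: the argument of $s_0$ lies in $Q^{\co C}$, and the colinearity computation needs $\Delta_P\bigl(s_0(b)\,p\bigr)=s_0(b)\Delta_P(p)$, i.e.\ it needs $s_0(b)\in P^{\co C}$. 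The entwined-module identity only gives $\Delta_P(s_0(b)p)=s_0(b)_{(0)}\psi_P(s_0(b)_{(1)}\ot p)$, and none of the strong-connection axioms \eqref{el1}--\eqref{elright} can kill the extra leg $s_0(b)_{(1)}$, since they constrain $\ell$, not $s_0$. A concrete failure: $C=\OU$, $P=(k[x]/(x^3))\ot C$, $Q=(k[y]/(y^2))\ot C$ with the diagonal coactions on the second factor, $\pi$ the quotient map tensored with $\id$, $\ell({\V}^n)=(1\ot{\V}^{-n})\ot(1\ot{\V}^{n})$, and a unital section with $s_0(y\ot1)=x\ot1+x^2\ot{\V}$; then $s(y\ot1)=x\ot1+x^2\ot{\V}$, which is not coinvariant although $y\ot1$ is, so $\Delta_P\circ s\neq(s\ot\id)\circ\Delta_Q$. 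What rescues the construction is precisely part (i): one first proves that $\pi^{\coc}:P^{\co C}\to Q^{\co C}$ is onto (the paper does this with the retraction $\sigma_\varphi(p)=p_{(0)}\ell(p_{(1)})^{\ip{1}}\varphi(\ell(p_{(1)})^{\ip{2}})$), and only then chooses the section on coinvariants to take values in $P^{\co C}$; with that choice your formula becomes the paper's $\alpha_R$ and the colinearity computation goes through. Since your plan derives (i) \emph{from} (ii), the argument as organized is circular: the missing ingredient in (ii) is exactly (i), which must come first.

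The final statement about $\pi^{-1}(Q^{\prime})$ is also not established: you correctly identify that $(t\ot s)\circ\ell_{Q^{\prime}}$ fails the lifting axiom by a term measuring the defect of the splittings, but you leave the correction unspecified, and this is where the real work lies. In the paper the correction is not built from the splittings alone but uses the strong connection $\ell$ on $P$ itself, namely $\ell_R=(\alpha_L\ot\alpha_R)\circ\ell^{\prime}+\ell-\bigl(m_P\circ(\alpha_L\ot\alpha_R)\circ\ell^{\prime}\bigr)*\ell$, whose left colinearity requires a nontrivial computation with $\psi_P^{\pm1}$ (the steps \eqref{coli0}--\eqref{coli}); moreover this only forces the \emph{first} leg into $\pi^{-1}(Q^{\prime})$, so one must repeat the construction on the other side to get $\ell_L$ with image in $P\ot\pi^{-1}(Q^{\prime})$ and then feed $\ell_L$ back into the first formula to obtain $\ell_{LR}$ with both legs in $\pi^{-1}(Q^{\prime})$. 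A single correction of the kind you anticipate will not achieve both leg conditions and the splitting axiom simultaneously, so this part of your proposal is a plan with an acknowledged hole rather than a proof; your remarks that the entwining and its inverse restrict to $\pi^{-1}(Q^{\prime})$ (paralleling \eqref{pipsi} and \eqref{psiinv}) are fine, but they only cover the easier half of the argument.
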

\begin{proof}
It follows from the right colinearity and surjectivity of $\pi$ 
that $ \pi (P^{\co C}) \subseteq Q^{\co C}$. To prove the
converse inclusion,  we take advantage of the left $P^{\co C}$-linear
retraction of the inclusion $P^{\co C}\subseteq P$ that was used to
prove \cite[Theorem~2.5(3)]{bh04}:
\[
\sigma_\varphi:P\longrightarrow P^{\co C},\quad 
\sigma_\varphi(p):=p_{(0)}\ell(p_{(1)})^{\langle 1 \rangle}
\varphi(\ell(p_{(1)})^{\langle 2 \rangle})\,.
\]
Here $\ell$ is a strong connection on $P$ and $\varphi$ is any unital
linear functional on $P$. 
It follows from \eqref{ei1} that $\sigma_\varphi(p)\in P^{\co C}$. 
If $\pi(p)\in Q^{\co C}$, then 
$\sigma_\varphi(p)$ is a desired element of $P^{\co C}$ that is mapped by $\pi$  
to $\pi(p)$. Indeed, since $\pi(p_{(0)})\otimes p_{(1)}=\pi(p)_{(0)}\otimes \pi(p)_{(1)}=\pi(p)\otimes e$, 
using the unitality of $\pi$, $\varphi$, and $\ell(e)=1\ot 1$,
  we compute
\[
\pi(\sigma_\varphi(p))=\pi(p_{(0)})\pi(\ell(p_{(1)})^{\langle 1 \rangle})
\varphi(\ell(p_{(1)})^{\langle 2 \rangle})=\pi(p).
\]

To show the second assertion, let us choose 
any unital $k$-linear splitting of $\pi\!\!\upharpoonright_{\hsp P^{\co C}}$ and denote it 
by~$\alpha^{\co C}$.
We want to prove that the formula 
\[
\alpha_R(q):= 
\alpha^{\co C} (q_{(0)}\pi (\ell(q_{(1)})^{\langle 1 \rangle})) 
\ell(q_{(1)})^{\langle 2 \rangle}
\]
 defines a unital right colinear splitting of~$\pi$. 
Since $\pi$ is surjective, we can write $q=\pi(p)$. Then, using properties of $\pi$, 
we obtain: 
\begin{align}
q_{(0)}\hs \pi(\ell(q_{(1)})^{\ip{1}})\!\ot \ell(q_{(1)})^{\ip{2}}
&= \pi(p)_{(0)}\hs \pi(\ell(\pi(p)_{(1)})^{\ip{1}})\!\ot \ell(\pi(p)_{(1)})^{\ip{2}} \nonumber\\
&= \pi(p_{(0)})\hs \pi(\ell(p_{(1)})^{\ip{1}})\!\ot \ell(p_{(1)})^{\ip{2}} \nonumber\\
&= \pi(p_{(0)}\ell(p_{(1)})^{\ip{1}})\!\ot \ell(p_{(1)})^{\ip{2}}. 
\end{align}
Now it follows from \eqref{ei1} that the above tensor is in $Q^{\co C}\ot P$. 
Hence $\alpha_R$ is well defined. It is straightforward to verify that 
$\alpha_R$ is unital, right colinear, and splits $\pi$. (Note that, since $q\in Q^{\co C}$ 
implies $q_{(0)}\ot q_{(1)} = q\ot e$, we have 
$\alpha^{\co C} =\alpha_R\!\upharpoonright_{\hsp Q^{\co C}}$.)
The third assertion can be proven in an analogous manner. 

To prove $(iv)$, we first show that 
the inverse of the canonical map 
$\can_Q: Q\ot_{Q^\coc} Q{\ra} Q\ot C$ (see \eqref{can}) 
is given by 
\[
 \can_Q^{-1} : Q \ot C\lra Q\underset{Q^\coc}{\ot} Q, \quad 
q\ot c\longmapsto q\pi(\ell(c)^{\langle 1 \rangle})\underset{Q^\coc}{\ot}\pi(\ell(c)^{\langle 2 \rangle}). 
\]
Using the properties of $\pi$ and  $\ell$, we get 
\begin{align}                     \nonumber
 (\can_Q\circ \can_Q^{-1}) \,(\pi(p)\ot c) 
&= \can_Q\Big( \pi(p\ell(c)^{\langle 1 \rangle})\underset{Q^\coc}{\ot}\pi(\ell(c)^{\langle 2 \rangle})\Big)  \\
&= \pi\left(p\,\ell(c)^{\langle 1 \rangle}\,
{\ell(c)^{\langle 2 \rangle}}_{\!(0)}\right)\ot {\ell(c)^{\langle 2 \rangle}}_{\!(1)}\nonumber \\
&=\pi(p)\ot c\hs .
\label{R}
\end{align}
Similarly, 
\begin{align}                                             \nonumber
 (\can_Q^{-1} \circ\can_Q)\Big(\pi(p)\underset{Q^\coc}{\ot} \pi(p^{\hs\prime})\Big)
 &=\can_Q^{-1}\Big(\pi(pp^{\hs\prime}_{(0)})\ot p^{\hs\prime}_{(1)}\Big)
\nonumber \\
 &= \pi(pp^{\hs\prime}_{(0)}\ell(p^{\hs\prime}_{(1)} )^{\langle 1 \rangle})
\underset{Q^\coc}{\ot} \pi( \ell(p^{\hs\prime}_{(1)})^{\langle 2 \rangle})\\
&= \pi(p)\underset{Q^\coc}{\ot} \pi(p^{\hs\prime}_{(0)}\ell(p^{\hs\prime}_{(1)} )^{\langle 1 \rangle} 
\ell(p^{\hs\prime}_{(1)})^{\langle 2 \rangle})\nonumber \\
&= \pi(p)\underset{Q^\coc}{\ot}   \pi(p^{\hs\prime}). 
\label{tcan}         
\end{align}
Here we used the fact that 
$\pi(p^{\hs\prime}_{(0)}\ell(p^{\hs\prime}_{(1)} )^{\langle 1 \rangle})
\ot \ell(p^{\hs\prime}_{(1)})^{\langle 2 \rangle}\in Q^\coc\ot P$. 
Hence the extension $Q^\coc\subseteq Q$ is Galois, and we have 
the canonical entwining $\psi_Q: C\ot Q\ra Q\ot C$.

Our next aim is to prove that $\psi_Q$ is bijective. 
We know by assumption that the canonical entwining 
$\psi_P : C\ot P\ra P\ot C$ is invertible. 
To determine its inverse, recall that the left and right coactions are 
given by $\psi_P^{-1}(p\ot e)$ and $\psi_P(e\ot p)$, respectively. 
Then apply \eqref{entwining} to compute 
 \begin{align} \nonumber
 \psi_P\Big( (p\hs \ell(c)^{\!\ip{1}})_{\hsp(-1)} \ot (p\hs \ell(c)^{\!\ip{1}})_{\hsp(0)}\hs\ell(c)^{\!\ip{2}}\Big) 
 &= p\hs \ell(c)^{\!\ip{1}}\psi_P\Big( e\ot \ell(c)^{\!\ip{2}}\Big) \\
&=p\hs {\ell(c)^{\!\ip{1}}\ell(c)^{\!\ip{2}}}_{\!(0)}\ot {\ell(c)^{\!\ip{2}}}_{\!(1)}\nonumber \\
&=\ p\ot c.  \label{2.11}
 \end{align}
Hence $\psi_P^{-1}(p\ot c)
=(p\hs \ell(c)^{\hsp\ip{1}})_{\hsp(-1)}\ot (p\hs \ell(c)^{\hsp\ip{1}})_{\hsp(0)}\hs \ell(c)^{\hsp\ip{2}}$. 
On the other hand, 
\begin{align} \nonumber
 \psi_Q(c\ot \pi(p))&=\pi(\ell(c)^{\hsp\ip{1}}) \big(\pi(\ell(c)^{\hsp\ip{2}})\hs \pi(p)\big)_{\!(0)} 
\ot \big(\pi(\ell(c)^{\hsp\ip{2}})\hs \pi(p)\big)_{\!(1)}\\
&=\pi\big(\ell(c)^{\hsp\ip{1}}\big)\, \pi\big((\ell(c)^{\hsp\ip{2}}\hs p)_{\hsp(0)} \big)
\ot (\ell(c)^{\hsp\ip{2}}\hs p)_{\hsp(1)}\nonumber \\
&= (\pi\ot\id)\big(\psi_P(c\ot p)\big) ,    \label{pipsi}  \\[8pt]
\nonumber
(\id\ot \pi)\big( \psi_P^{-1}(p\ot c) \big)  
&=(p\,\ell(c)^{\hsp\ip{1}})_{\hsp(-1)}  \ot 
\pi\big((p\,\ell(c)^{\hsp\ip{1}})_{\hsp(0)}\big)\, \pi\big(\ell(c)^{\hsp\ip{2}}\big)\\
&= \big(\pi(p\,\ell(c)^{\hsp\ip{1}})\big)_{\!(-1)}  \ot 
\big(\pi(p\,\ell(c)^{\hsp\ip{1}})\big)_{\!(0)}\, \pi(\ell(c)^{\hsp\ip{2}})\nonumber \\
&= {}_Q\Delta\llp\pi(p)\hs \pi(\ell(c)^{\hsp\ip{1}})\lrp\hs \pi(\ell(c)^{\hsp\ip{2}}).
 \label{pipsi-1}
\end{align}
The second part of the above computation implies that the assignment 
\[
\psi_Q^{-1}:Q\ot C\lra C\ot Q, \quad \pi(p)\ot c\longmapsto (\id\ot \pi)
(\psi_P^{-1}(p\ot c))         \label{psiinv}
\]
is well defined. 
Now it follows from the first part that $\psi_Q^{-1}$ 
is the inverse of $\psi_Q$: 
\[\label{psiphi}
\psi_Q\Big(\psi_Q^{-1}\big(\pi(p)\ot c\big)\Big)
=\psi_Q\Big( (\id\ot \pi)\big(  \psi_P^{-1}(p\ot c)\big)\Big)
= (\pi\ot\id )\Big(\psi_P\big(  \psi_P^{-1}(p\ot c)\big)\Big)
=\pi(p)\ot c,
\]\[
                           \label{phipsi}
 \psi_Q^{-1}\Big(\psi_Q\big(c\ot\pi(p) \big)\Big)
= \psi_Q^{-1}\Big( (\pi\ot \id)\big(  \psi_P(c\ot p)\big)\Big)
= (\id\ot\pi )\Big(\psi_P^{-1}\big(  \psi_P(c\ot p)\big)\Big)
= c\ot\pi(p). 
\]

On the other hand, we observe that $(\pi\ot\pi)\circ \ell$ is a strong connection on $Q$. Combined 
 with the just proven existence of a bijective entwining that makes $Q$ 
an entwined module, it allows us to apply Lemma~\ref{L1} and conclude the proof of $(iv)$. 

To prove the final statement of the lemma, note first that  
$\pi^{-1}(Q^{\prime})\in {}_e^C\!\mathbf{Alg}_e^C$.
Next, observe that,  
 if $\ell^{\prime} : C \ra Q^\prime\ot Q^\prime$ is a strong connection on $Q^\prime$, 
then it is also a strong connection on $Q$. 
Now, it follows from \eqref{psil} that for any $q\in Q^\prime$ 
\[
 \psi_Q(c\ot q) 
=\ell^\prime(c)^{\ip{1}} \hsp
 \left(\ell^\prime(c)^{\ip{2}}\hs q \right)_{\hsp(0)} 
 \ot \left(\ell^\prime(c)^{\ip{2}}\hs q\right)_{\hsp(1)}\in Q^\prime \ot C. 
\]
Much the same way, it follows from the $Q$-analog of the formula
following \eqref{2.11} that\linebreak
$\psi_Q^{-1} (Q^\prime\ot C)\subseteq C\ot Q^\prime$. Hence to see that 
$\psi_P$ and $\psi_P^{-1}$ restrict  to $\pi^{-1}(Q^{\prime})$, 
we can apply \eqref{pipsi} and \eqref{psiinv}, respectively. 

A key step now is to construct  a strong connection on $\pi^{-1}(Q^\prime)$. 
Let $\ha_R$ and $\ha_L$ be, respectively, right and  left colinear unital 
splittings of $\pi$. Their existence is guaranteed by the already proven $(ii)$ and $(iii)$. 
The map $(\ha_L\ot\ha_R)\circ\ell^\prime : C \ra \pi^{-1}(Q)\ot \pi^{-1}(Q)$ is 
bicolinear and satisfies 
\[
\ha_L(\ell^\prime(e)^{\ip{1}})\ot \ha_R(\ell^\prime(e)^{\ip{2}}) =1\ot 1.
\]
However, 
\[
1\ot c-\big(\tcan \circ (\ha_L\ot\ha_R)\circ \ell^\prime  \big)(c) 
= 1\ot c - 
\ha_L(\ell^\prime(c_{(1)})^{\hsp\ip{1}})\hs \ha_R(\ell^\prime(c_{(1)})^{\hsp\ip{2}}) \ot c_{(2)} \neq 0.
\]
To solve this problem, 
we apply to it the splitting of the lifted canonical map given by a strong connection $\ell$ 
(see \eqref{C2}), and add to $(\ha_L\ot\ha_R)\circ\ell^\prime$\,:
\[ \label{lR}
\ell_R(c):= (\ha_L\ot\ha_R)(\ell^\prime(c)) + \ell(c) 
-  \ha_L(\ell^\prime(c_{(1)})^{\hsp\ip{1}})\hs \ha_R(\ell^\prime(c_{(1)})^{\hsp\ip{2}}) \hs 
\ell(c_{(2)}^{\ip{1}}) \ot \ell(c_{(2)}^{\ip{2}}). 
\]
Now $\tcan\circ \ell_R = 1\ot \id$, as needed. Also, $\ell_R(e)=1\ot 1$ 
and $((\pi\ot\id)\circ\ell_R)(C)\subseteq Q^\prime \ot P$. 
The right colinearity of $\ell_R$ is clear.
To check the left colinearity of $\ell_R$, 
using the fact that $P$ is a $\psi_P$ entwined and $e$-coaugmented module, 
we show that $(m_P\circ(\ha_L\ot \ha_R)\circ \ell^\prime )*\ell$ 
is left colinear. (Here $m_P$ is the multiplication of~$P$.) First we note that 
\[
({}_{P}\Delta \ot \id)\circ \llp(m_P\circ(\ha_L\ot \ha_R)\circ \ell^\prime )*\ell\lrp
= \llp\id \ot (m_P\circ(\ha_L\ot \ha_R)\circ \ell^\prime )*\ell\lrp\circ \Delta  \label{coli0}
\]
is equivalent to 
\begin{align}\nonumber
&\ha_L(\ell^\prime(c_{(1)})^{\ip{1}})\hs \ha_R(\ell^\prime(c_{(1)})^{\ip{2}})\hs \ell(c_{(2)})^{\ip{1}}\hsp \ot\hsp  e \hsp  \ot \hsp \ell(c_{(2)})^{\ip{2}} \\
&\hspace{40pt}= \psi_P\Big( c_{(1)} \ot  \ha_L(\ell^\prime(c_{(2)})^{\ip{1}})\hs \ha_R(\ell^\prime(c_{(2)})^{\ip{2}})
\hs \ell(c_{(3)})^{\ip{1}} \Big) \ot  \ell(c_{(3)})^{\ip{2}} . 
\end{align}
Since 
$c_{(1)} \ot  \ha_L(\ell^\prime(c_{(2)})^{\ip{1}}) \ot \ell^\prime(c_{(2)})^{\ip{2}}
= \psi_P^{-1}\big(\ha_L(\ell^\prime(c)^{\ip{1}})  \ot e \big)  \ot \ell^\prime(c)^{\ip{2}}$, we obtain 
\begin{align}
&\psi_P\Big(  c_{(1)} \ot  \ha_L(\ell^\prime(c_{(2)})^{\ip{1}})\hs \ha_R(\ell^\prime(c_{(2)})^{\ip{2}})
\hs \ell(c_{(3)})^{\ip{1}} \Big)  \ot  \ell(c_{(3)})^{\ip{2}} 
\nonumber\\
&\hspace{80pt}= \ha_L(\ell^\prime(c_{(1)})^{\ip{1}})\, 
\psi_P\Big(e \ot \ha_R(\ell^\prime(c_{(1)})^{\ip{2}})
\hs \ell(c_{(2)})^{\ip{1}} \Big)  \ot  \ell(c_{(2)})^{\ip{2}} 
\nonumber\\
&\hspace{80pt}= \ha_L(\ell^\prime(c_{(1)})^{\ip{1}})\hs \ha_R(\ell^\prime(c_{(1)})^{\ip{2}})\, 
\psi_P\Big(c_{(2)} \ot \ell(c_{(3)})^{\ip{1}}\Big) \ot \ell(c_{(3)})^{\ip{2}} 
\nonumber\\
&\hspace{80pt}= 
\ha_L(\ell^\prime(c_{(1)})^{\ip{1}})\hs \ha_R(\ell^\prime(c_{(1)})^{\ip{2}})\, 
\psi_P\Big(\psi_P^{-1} \big(\ell(c_{(2)})^{\ip{1}}\ot e\big)\Big)\ot  \ell(c_{(2)})^{\ip{2}} 
\nonumber\\
&\hspace{80pt}=\ha_L(\ell^\prime(c_{(1)})^{\ip{1}})\hs \ha_R(\ell^\prime(c_{(1)})^{\ip{2}})\, 
\ell(c_{(2)})^{\ip{1}}\ot e\ot  \ell(c_{(2)})^{\ip{2}} .\label{coli}
\end{align}

Hence $\ell_R$ is a strong connection with the property $\ell_R(C)\subseteq \pi^{-1}(Q^\prime)\ot P$. 
In a similar manner, we construct a strong connection $\ell_L$ with the property 
$\ell_L(C)\subset P\ot \pi^{-1}(Q^\prime )$. 
Now we need to apply the splitting of the left lifted canonical map 
given by $\ell$ (see \eqref{C1}) to derive the formula 
\[     \label{ell_L}
\ell_L:=(\ha_L\ot\ha_R)\circ \ell^\prime + \ell - \ell*(m_P\circ (\ha_L\ot\ha_R)\circ \ell^\prime). 
\]
It is clear that $\ell_L(e)=1\ot 1$ and 
$\ell_L(C)\subseteq P\ot \pi^{-1}(Q^\prime )$. 
A computation similar to \eqref{coli} shows the right colinearity of $\ell_L$. 
Since furthermore $\psi_P(1\ot c)=c\ot 1$ for any $c\in C$ and 
$\tcan =\psi_P\circ \tcan_L$, we obtain 
\[
\tcan(\ell_L(c))=\psi_P\big(\tcan_L(\ell(c))\big) = \psi_P(c\ot 1)=1\ot c. 
\]
Hence $\ell_L$ is a desired strong connection. Plugging it into \eqref{lR} 
instead of $\ell$, we get a strong connection 
\[
\ell_{LR}= (\ha_L\ot \ha_R)\circ \ell^\prime + \ell_L 
- (m_p\circ (\ha_L\ot \ha_R)\circ \ell^\prime)*\ell_L
\]
with the property $\ell_{LR}\subseteq \pi^{-1}(Q^\prime)\ot \pi^{-1}(Q^\prime)$. 
Applying now  Lemma~\ref{L1} ends the proof of this lemma. 
\end{proof}

\subsection{The one-surjective pullbacks of principal coactions
are principal}\label{pbpe}


Our goal now is to show that the subcategory of principal extensions 
is closed under one-surjective pullbacks. Here the right coaction
is the coaction defining a principal extension and the left coaction
 is the
one defined by the inverse of the canonical entwining 
(see~\eqref{leftco}). 
With this structure, principal extensions form a full subcategory
of ${}_e^C\!\mathbf{Alg}_e^C$.
The following
theorem is the main result of this paper generalizing
 the theorem of
\cite{hkmz} on the pullback of surjections of principal comodule
algebras:
\begin{thm}\label{prop-pe}
Let $C$ be a coalgebra, $e\in C$ a group-like element, and
$P$  the pullback of
$\pi_1:P_1\rightarrow P_{12}$ and  $\pi_2:P_2\rightarrow P_{12}$ in
the category ${}_e^C\!\mathbf{Alg}_e^C$ of  unital algebras with 
$e$-coaugmented left and right $C$-coactions.  
If $\pi_1$ or $\pi_2$ is surjective
and both $P_1$ and $P_2$ are principal $e$-coaugmented
$C$-extensions, then also
$P$ is
a  principal $e$-coaugmented $C$-extension.
\end{thm}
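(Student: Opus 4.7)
The plan is to construct an explicit strong connection on the pullback $P$ and then invoke Lemma~\ref{L1}. Without loss of generality, take $\pi_1$ to be surjective. By the remark following Lemma~\ref{L1}, fix strong connections $\ell_1\colon C\to P_1\otimes P_1$ and $\ell_2\colon C\to P_2\otimes P_2$. Applying Lemma~\ref{fer}(ii)--(iii) to $\pi_1$ yields unital right- and left-$C$-colinear splittings $\alpha_R$ and $\alpha_L$ of $\pi_1$, while Lemma~\ref{fer}(iv) shows that $P_{12}$ is principal with strong connection $\ell_{12}:=(\pi_2\otimes\pi_2)\circ\ell_2$.

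The core step is to replace $\ell_1$ by a strong connection $\tilde\ell_1\colon C\to P_1\otimes P_1$ whose $\pi_1\otimes\pi_1$-image equals $\ell_{12}$. Imitating verbatim the $\ell_R$-construction in the final paragraph of the proof of Lemma~\ref{fer}, I would set
\[
\tilde\ell_1(c):=(\alpha_L\otimes\alpha_R)(\ell_{12}(c))+\ell_1(c)-\alpha_L(\ell_{12}(c_{(1)})^{\langle 1\rangle})\,\alpha_R(\ell_{12}(c_{(1)})^{\langle 2\rangle})\,\ell_1(c_{(2)})^{\langle 1\rangle}\otimes\ell_1(c_{(2)})^{\langle 2\rangle}.
\]
The same Sweedler-type computations that verify $\ell_R$ to be a strong connection carry over: bicolinearity follows from the colinearity of $\alpha_L,\alpha_R,\ell_1,\ell_{12}$; the identities $\tcan\circ\tilde\ell_1=1\otimes\id$ and $\tilde\ell_1(e)=1\otimes 1$ reduce, via \eqref{elco}, to the corresponding properties of $\ell_1$ and $\ell_{12}$. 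Applying $\pi_1\otimes\pi_1$ telescopes the three terms (again using \eqref{elco}) precisely into $\ell_{12}(c)$, which is the compatibility required.

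With $\tilde\ell_1$ and $\ell_2$ now projecting to the same element of $P_{12}\otimes P_{12}$, exactness of the tensor product over the ground field identifies $P\otimes P$ with the fibre product of $P_1\otimes P_1$ and $P_2\otimes P_2$ over $P_{12}\otimes P_{12}$, so
\[
\ell\colon C\lra P\otimes P,\qquad \ell(c):=\bigl(\tilde\ell_1(c)^{\langle 1\rangle},\ell_2(c)^{\langle 1\rangle}\bigr)\otimes\bigl(\tilde\ell_1(c)^{\langle 2\rangle},\ell_2(c)^{\langle 2\rangle}\bigr),
\]
is well defined. The strong-connection axioms \eqref{sc} for $\ell$ follow componentwise from those of $\tilde\ell_1$ and $\ell_2$, since the multiplication, unit, and both coactions on $P$ are componentwise. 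An invertible entwining on $P$ making $P$ an entwined module is assembled analogously from $\psi_{P_1}$ and $\psi_{P_2}$, using once more the exactness of $-\otimes C$ together with the naturality of the canonical entwining with respect to $\pi_1$ and $\pi_2$ (so that $\psi_{P_1}$ and $\psi_{P_2}$ agree in $P_{12}\otimes C$ on compatible inputs). Lemma~\ref{L1} then delivers the principality of $P$.

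I expect the main obstacle to be the construction of $\tilde\ell_1$: producing a bicolinear map that simultaneously splits $\tcan$ on $P_1$ and matches $\ell_{12}$ after projection. This is precisely where the hypothesis that one of $\pi_1,\pi_2$ is surjective enters, through the splittings $\alpha_L,\alpha_R$ from Lemma~\ref{fer}; without surjectivity there is no way to lift $\ell_{12}$ bicolinearly into $P_1\otimes P_1$. The remaining verifications---the bicolinearity and splitting properties of $\ell$, and the componentwise invertibility of the entwining on $P$---are Sweedler-type bookkeeping of the same flavour as in Lemma~\ref{fer}(iv) and should not present conceptual difficulties.
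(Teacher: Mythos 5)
Your overall strategy---construct a strong connection on $P$ and invoke Lemma~\ref{L1}, using Lemma~\ref{fer} for the splittings---is indeed the paper's strategy, and your $\tilde\ell_1$ is essentially the paper's intermediate object: with $L:=m_{P_1}\circ(\alpha^{12}_L\ot\alpha^{12}_R)\circ\ell_2$ the paper forms $\ell_{II}=\ell_I+\ell_1-L*\ell_1$, which contains exactly your correction term (note, though, that the left colinearity of this term is not a formal consequence of the colinearity of the ingredients; it needs the entwined-module computation as in \eqref{coli0}--\eqref{coli}). The genuine gap is the step where you glue $\tilde\ell_1$ and $\ell_2$ into a map $C\ra P\ot P$: the claim that $P\ot P$ is the fibre product of $P_1\ot P_1$ and $P_2\ot P_2$ over $P_{12}\ot P_{12}$ is false. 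Exactness of tensoring over a field does give $C\ot P=\ker\big(\id_C\ot(\pi_1\circ\pr_1-\pi_2\circ\pr_2)\big)$, which is why the entwining can be assembled componentwise (Lemma~\ref{2.3}); but when both tensor legs are constrained one only gets $P\ot P=\ker(f\ot\id)\cap\ker(\id\ot f)$ with $f:=\pi_1\circ\pr_1-\pi_2\circ\pr_2$, a subspace of $(P_1\times P_2)\ot(P_1\times P_2)$ that also involves the cross components $P_1\ot P_2$ and $P_2\ot P_1$. The natural map $P\ot P\ra(P_1\ot P_1)\times_{(\pi_1\ot\pi_1,\,\pi_2\ot\pi_2)}(P_2\ot P_2)$ is in general neither injective (for nonzero $k_i\in\ker\pi_i$ the element $(k_1,0)\ot(0,k_2)\in P\ot P$ maps to $0$) nor surjective (with $\dim P_1=3$, $\dim P_{12}=2$, $\dim P_2=1$ and $\pi_1$ onto, $\dim(P\ot P)=4$ while the fibre product has dimension $6$). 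Relatedly, your displayed formula $\ell(c)=(\tilde\ell_1(c)^{\ip{1}},\ell_2(c)^{\ip{1}})\ot(\tilde\ell_1(c)^{\ip{2}},\ell_2(c)^{\ip{2}})$ is not well defined: the Sweedler-type legs of the two unrelated tensors $\tilde\ell_1(c)$ and $\ell_2(c)$ cannot be paired term by term.

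This is not a technicality but the crux of the theorem: turning the compatible pair $(\tilde\ell_1,\ell_2)$ into an actual element of $P\ot P$ that still satisfies $\tcan\circ\ell=1\ot\id$ and bicolinearity is precisely what the paper's chain of corrections $\ell_I,\ell_{II},\ell_{IV}$ accomplishes, and it uses two ingredients absent from your proposal: a strong connection $\ell_1$ chosen with values in $P_1\ot\pi_1^{-1}(\pi_2(P_2))$ (the final statement of Lemma~\ref{fer}, applicable because $\pi_2(P_2)$ is principal by Lemma~\ref{fer}(iv)), and the right colinear splitting $\alpha^{21}_R=\alpha^2_R\circ\pi_1\!\upharpoonright_{\pi_1^{-1}(\pi_2(P_2))}$, used to replace the defect $L*\ell_1*L-\ell_1*L$---which lies only in $P\ot(P_1\times P_2)$---by $-(\id\ot\alpha^{21}_R)\circ(L*\ell_1*L-\ell_1*L)$, an expression that lands in $P\ot P$ and is annihilated by the lifted canonical map. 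As it stands, your argument assumes well-definedness of $\ell$ exactly where the real work of the proof lies, so the central step needs to be repaired along these lines.
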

\begin{proof}

Without loss of generality, we assume that $\pi_1$ is surjective. 
We first show that $P$ inherits an entwined structure from $P_1$ and $P_2$. 
\begin{lem} \label{2.3}
 Let $\psi_1$ and $\psi_2$ denote the entwining structures of $P_1$ and $P_2$, 
respectively. Then $P$ is an entwined module with an invertible entwining structure 
\[
\psi =\psi_1 \circ (\id\ot\pr_1) + \psi_2 \circ (\id\ot\pr_2). 
 \]
 Here $\pr_1$ and $\pr_2$ are  morphisms of the pullback diagram 
 as in \eqref{A_is_fibre_product}. 
\end{lem}
\begin{proof} 
Our strategy is to construct a bijective map 
$\tilde\psi: C\ot (P_1\times P_2)\ra (P_1\times P_2)\ot C$,  
and to show that it restricts to a bijective entwining on $C\ot P$. 
We put 
\[
\tilde\psi :=\psi_1 \circ (\id\ot\tilde\pr_1) + \psi_2 \circ (\id\ot\tilde\pr_2). 
 \]
The symbols $\tilde\pr_1$ and $\tilde\pr_2$ stand for respective 
componentwise projections. Their restrictions to $P$ yield 
$\pr_1$ and $\pr_2$. It is easy to check that the inverse of 
$\tilde\psi$ is given by 
\[
\tilde\psi^{-1}  = \psi_1^{-1} \circ (\tilde\pr_1\ot \id)+ \psi_2^{-1} \circ (\tilde\pr_2\ot\id)
\]

To show that $\tilde\psi(C\ot P)\subseteq P\ot C$ and 
$\tilde\psi^{-1}(P\ot C)\subseteq C\ot P$, 
we note first that
$P_{12}$ and $\pi_2(P_2)$ are principal by Lemma~\ref{fer}(iv). 
Consequently, their canonical entwinings $\psi_{12}$ 
and $\psi_{\pi_2(P_2)}$ are bijective. 
Furthermore, arguing as in the proof of Lemma~\ref{fer}, we see that 
 $ \psi_{\pi_2(P_2)} = \psi_{12}\!\hsp\upharpoonright_{ C\ot\pi_2(P_2)}$
 and $ \psi_{\pi_2(P_2)}^{-1} = \psi_{12}^{-1}\!\hsp\upharpoonright_{ \pi_2(P_2)\ot C}$. 
 An advantage of having both summands in terms of $\psi_{12}$ is that 
we can apply \eqref{pipsi} to compute 
\begin{align}\nonumber
\big((\pi_1\hsp \circ\hsp  \tilde\pr_1 - \pi_2\hsp \circ\hsp  \tilde\pr_2)\ot \id   \big)\hsp \circ\hsp  \tilde\psi 
&= (\pi_1\hsp \circ\hsp  \tilde\pr_1\hsp  \ot\hsp  \id)\hsp \circ\hsp  
\psi_1\hsp  \circ\hsp  (\id\hsp \ot\hsp\tilde\pr_1)
-(\pi_2\hsp \circ \hsp \tilde\pr_2\hsp \ot \hsp\id)\hsp \circ\hsp  \psi_1 \circ (\id\hsp \ot\hsp\tilde\pr_1)\\
&\hs+\hs(\pi_1\hsp \circ\hsp  \tilde\pr_1\hsp  \ot\hsp  \id)\hsp \circ\hsp         \nonumber
\psi_2\hsp  \circ\hsp  (\id\hsp \ot\hsp\tilde\pr_2)
-(\pi_2\hsp \circ \hsp \tilde\pr_2\hsp \ot \hsp\id)\hsp \circ\hsp  \psi_2 \circ (\id\hsp \ot\hsp\tilde\pr_2)\\
&=(\pi_1\hsp  \ot\hsp  \id)\hsp \circ\hsp  \psi_1\hsp  \circ\hsp  (\id \ot\tilde\pr_1)
-(\pi_2 \ot \id)\hsp \circ\hsp  \psi_2 \circ (\id \ot\tilde\pr_2)  \nonumber\\
&= \psi_{12}\circ (\id\ot \pi_1)\circ (\id \ot \tilde\pr_1) 
-\psi_{\pi_2(P_2)} \circ (\id\ot \pi_2)\circ (\id \ot \tilde\pr_2) \nonumber\\
&= \psi_{12}\circ \big(\id\ot (\pi_1\circ \tilde\pr_1 - \pi_2\circ \tilde\pr_2)  \big ).
\end{align}
Hence $\tilde\psi (C\ot P)\subseteq P\ot C$. Much the same way,
using \eqref{psiinv} instead of \eqref{pipsi}, we show that the bijection 
$\tilde\psi^{-1}(P\ot C)\subseteq C\ot P$. 

It remains to verify that the bijection 
$\psi= \tilde\psi\!\upharpoonright_{ C\ot P}$ is an entwining that makes $P$ an entwined module. 
The former is proven by a direct checking of \eqref{entwining} and \eqref{e2}. 
The latter follows from the fact that $P_1$ and $P_2$ are, respectively, 
$\psi_1$ and $\psi_2$ entwined modules:
\begin{align} \nonumber
\Delta_P(pq) &=\Delta_{P_1}(\pr_1(p)\pr_1(q)) + \Delta_{P_2}(\pr_2(p)\pr_2(q)) 
\nonumber\\
&=\pr_1(p_{(0)})\hs \psi_1(p_{(1)}\ot \pr_1(q)) + \pr_2(p_{(0)})\hs \psi_2(p_{(1)}\ot \pr_2(q))
\nonumber\\
&=\big(\pr_1(p_{(0)}) +\pr_2(p_{(0)}) \big)\big( \psi_1(p_{(1)}\ot \pr_1(q)) +  \psi_2(p_{(1)}\ot \pr_2(q)) \big)
\nonumber\\
&= p_{(0)} \psi(p_{(1)}\ot q ).
\end{align}
This proves the lemma.
\end{proof}

Let $\alpha^1_L$ and $\alpha^1_R$ be a unital left colinear splitting and a unital right 
colinear splitting of $\pi_1$, respectively. Also, let $\alpha^2_R$ be
a right colinear splitting of $\pi_2$ viewed as a map onto $\pi_2(P_2)$. 
Such maps exist by Lemma~\ref{fer}. 
On the other hand, by \cite[Lemma~2.2]{bh04}, since $P_1$ and $P_2$ are principal, 
they admit strong connections $\ell_1$ and $\ell_2$, respectively. 
For brevity, let us introduce the notation
\[     \label{alpha}
\alpha^{12}_L:=\alpha^1_L\circ\pi_2,
\quad
\alpha^{12}_R:=\alpha^1_R\circ\pi_2,
\quad
\alpha^{21}_R:=\alpha^2_R\circ\pi_1\!\upharpoonright_{\hsp\pi_1^{-1}(\pi_2(P_2))}\,,
\quad
L:=m_{P_1}\circ(\alpha^{12}_L\otimes\alpha^{12}_R)\circ\ell_2\,,
\]
where $m_{P_1}$ is the multiplication of $P_1$. 
The situation is illustrated in the following diagram: 
\begin{align}
\label{dia} \\[-18pt]
\nonumber
\xymatrix{ 
 \ \mbox{\mbox{ }\ \ \quad ${}^{\mbox{$C$}}\ \mbox{ }$}\ 
 \ar@<1.5ex>[dd]_L&&&&\ar@<-0.7ex>[lllldd]_{\mathrm{pr}_1} {}^{\mbox{$P$}} \ar@<0.7ex>[rrrrdd]^{\mathrm{pr}_2} &&&&\\
&&&&\ar@{_{(}->}[lllld]<-0.8ex> \ \pi_1^{-1}(\pi_2(P_2)) \ \ar@<1.0ex>[rrrrd]_{\alpha_R^{21}}&&& & \\
\ \mbox{\phantom{$\tilde P$}\ \ \quad $P_1\ \mbox{ }$}\ \ar@{-}[rrd]  && &&   
& &  &&  \ar@<-1.2ex>[llllllll]_{\alpha_L^{12}}\ar@<-0.2ex>[llllllll]^{\alpha_R^{12}}
	\ \ P_2\ .\qquad\mbox{ }\ar@{-}[lld]  \ar@{->>}[dllll]<0.5ex>^{\pi_2} \\
&& {\mbox{\footnotesize$\pi_1$}} \ar@{->>}[rrd] & &  \ar@<0.5ex>[urrrr]^{\alpha^2_R} 
 \makebox[0pt]{\phantom{$\Big($}}\pi_2(P_2) \ar@{_{(}->}[d]<-.2ex>
& & \makebox[0pt]{\phantom{$\big($}}{\mbox{\footnotesize$\pi_2$}} \ar[lld]& &\\
   &&&&\ar@<1.5ex>[uullll]^{\alpha^1_L} \ar@<-1.2ex>[uullll]_{\alpha^1_R}\  \makebox[0pt]{\phantom{$\big($}}P_{12}\   
&& & &
}
\end{align}

Our proves hinges on constructing a strong connection on $P$ out of 
strong connections on $P_1$ and $P_2$. 
Roughly speaking, the basic idea is to take a strong 
connection on $P_2$, induce a strong connection on the 
the common part $P_{12}$, and prolongate it to $P_1$.  
To this end, we check that 
$(\ha_L^{12}  + \id) \ot (\ha_R^{12}  + \id)$ 
is a unital bicolinear map from $P_2\ot P_2$ to $P\ot P$. 
Therefore, as a first approximation for constructing a strong connection on $P$, 
we choose the formula 
\[
\ell_I := \big( (\ha_L^{12}  + \id) \ot (\ha_R^{12}  + \id)\big) \circ \ell_2 .
\]
It is a bicolinear map from $C$ to $P \ot P$ satisfying 
$\ell_I(e)=1\ot 1$ as needed. However, it does not split the lifted canonical map:
\begin{align}
&(\tcan \circ  \ell_I)(c) - 1\ot c   \nonumber\\
&\qquad = \ha_L^{12}(\ell_2(c)^{\ip{1}} )\hs \ha_R^{12}(\ell_2(c)^{\ip{2}})_{(0)} 
   \ot \ha_R^{12}(\ell_2(c)^{\ip{2}})_{(1)} 
   + \ell_2(c)^{\ip{1}}{\ell_2(c)^{\ip{2}}}_{\!(0)} \ot {\ell_2(c)^{\ip{2}}}_{\!(1)} 
   - 1\ot c \nonumber\\
&\qquad =  \ha_L^{12}(\ell_2(c_{(1)})^{\ip{1}})\hs \ha_R^{12}(\ell_2(c_{(1)})^{\ip{2}})
    \ot c_{(2)}   +  (0,1)\ot c - 1\ot c  \nonumber\\
&\qquad = L(c_{(1)}) \ot c_{(2)}  -  (1,0) \ot c \;\in\;  P_1 \ot C. 
\end{align}
We correct it by adding to $\ell_I(c)$ the splitting of 
the lifted canonical map on $P_1\ot P_1$
afforded by $\ell_1$ and applied to 
$(1,0)\ot c - L(c_{(1)})\ot c_{(2)}$:
\begin{align}
\ell_{II}(c)&= \ell_I(c) + \ell_1(c)^{\ip{1}} \ot \ell_1(c)^{\ip{2}} 
   - L(c_{(1)})\hs \ell_1(c_{(2)})^{\ip{1}} \ot \ell_1(c_{(2)})^{\ip{2}}
   \nonumber\\
   &=(\ell_I + \ell_1 - L *\ell_1)(c) . 
\end{align}
The above approximation to a strong connection on $P$ is clearly right colinear. 
Using the fact that $P_1$ is a $\psi_1$-entwined and $e$-coaugmented module, 
we follow the lines of \eqref{coli0}--\eqref{coli} 
to show that $L*\ell_1$ 
is left colinear. 
Hence $\ell_{II}$ is bicolinear. It also satisfies $\ell_{II}(e)=1\ot 1$. However, 
the price we pay for having 
$\ell_{II}(c)^{\ip{1}}\hs {\ell_{II}(c)^{\ip{2}}}_{\!(0)} \ot {\ell_{II}(c)^{\ip{2}}}_{\!(1)} = 1 \ot c$ 
is that the image of $\ell_{II}$ is no longer in $P\ot P$. 

The troublesome term $\ell_1-L*\ell_1$ takes values in $P\ot P_1$. 
Now one would like to compose $\id \ot (\id + \ha_R^{21})$ with 
$\ell_1-L*\ell_1$ to force it taking values in $P\ot P$. 
However, since $\ha_R^{21}$ is defined only on 
$\pi_1^{-1}(\pi_2(P_2))$, we need to replace an arbitrary strong connection 
$\ell_1$ by a strong connection taking values in $P_1\ot \pi_1^{-1}(\pi_2(P_2))$. 
Such a strong connection is provided for us by \eqref{ell_L}:  
\[     \label{tell_L}
\tilde\ell_1:=(\ha_L^{12}\ot\ha_R^{12})\circ \ell_2 + \ell_1 - \ell_1*L. 
\]
Inserting 
$\tilde\ell_1$ in place of $\ell_1$ allows us to apply the correction map 
$\id \ot (\id + \ha_R^{21})$ to obtain 
\[ \label{elll}
\ell_{III} = \ell_I + \big(\id \ot (\id + \ha_R^{21})\big)\circ(\tilde\ell_1-L*\tilde\ell_1). 
\]
To end the proof, let us check that $\ell_{III}$ is indeed a strong connection on $P$. 
First, since $\ell_I(C)\subseteq P\ot P$ and 
$(\id + \ha_R^{21})(\pi_1^{-1}(\pi_2(P_2)))\subseteq P$, 
we conclude that $\ell_{III}$ takes values in $P\ot P$. 
Next, it is bicolinear because $\ha_R^{21}$ is right colinear. 
Also, it is clearly unital. To verify that $\ell_{III}$ splits the canonical map, first 
we note that 
$\tcan \circ (\id \ot  \ha_R^{21})\circ  (\tilde\ell_1-L*\tilde\ell_1)=0$ because 
$m_{P_1\times P_2}(p_1\ot p_2)=0$ for all $p_1\in P_1$ and $p_2\in P_2$. 
Combining this with the fact that 
$\tcan\circ (\ell_1^\prime - L*\ell_1^\prime)$  does not depend 
on the choice of  a strong connection $\ell_1^\prime$, we infer that 
$\tcan\circ \ell_{III}=\tcan\circ \ell_{II}=1\ot \id$. 
Thus $\ell_{III}$ is a strong connection on $P$,  as desired. 
Combining this fact with Lemma~\ref{2.3}  and Lemma~\ref{L1} proves the theorem. 
\end{proof}

Putting the formulas in the proof of Theorem~\ref{prop-pe}  together, we obtain 
the following strong connection on $P$: 
\begin{align}\label{key}
\ell\;=\;&\llp(\alpha^{12}_L+\id)\otimes(\alpha^{12}_R+\id)\lrp\circ\ell_2\\
&+\;(\eta_1\circ\he-L)*\Llp\llp\id\otimes(\id+\alpha^{21}_R)\lrp\circ
\llp\ell_1-\ell_1*L+(\alpha^{12}_L\otimes\alpha^{12}_R)\circ\ell_2\lrp
\Lrp.\nonumber
\end{align}

\section{The pullback picture of the standard quantum Hopf fibration}

Recall that the classical Hopf fibration is a $\rmU$-principal bundle 
given by the maps
\begin{align}
\pi : \S^3=\{(z_1,z_2)\in\C^2\,|\, |z_1|^2+|z_2|^2=1\}
&\lra \S^2\cong \C\mathrm{P}^1, \quad \pi((z_1,z_2))=[(z_1:z_2)], 
\nonumber \\
\S^3\times \rmU&\longrightarrow \S^3,\quad
(z_1,z_2)\triangleleft u= (z_1u,z_2u).
\end{align} 
To unravel the structure of this non-trivial fibration, we
split $\S^3$ into two disjoint parts: 
\[
\S^3=\{(z_1,z_2)\in\C^2\,|\, |z_1|^2<1, |z_2|^2=1- |z_1|^2\}\ 
\cup\  \{(z_1,0)\,|\, |z_1|=1\}.
\]
Note that both sets are invariant under the  $\rmU$-action.
The second set is $\rmU$, and first set is $\rmU$-equivariantly homeomorphic to the interior of the solid torus
 $\rmD\times \rmU$ equipped with the diagonal action. (Here 
$\rmD=\{z\in\C\,|\,|z|\leq 1\}$.) 
By an appropriate $\rmU$-equivariant gluing of
the boundary torus of $\rmD\times\rmU$ with $\rmU$, we recover
$\S^3$ with its   $\rmU$-action: 
\begin{align}\mbox{ } \label{diagram} \\[-18pt]
\nonumber
\mbox{
\xymatrix@=-1mm@R=2mm{ 
&&\S^3&&\\
 \phi_1(z,v)=(z, v\sqrt{1\hsp -\hsp |z|^2})  \quad&&&&\quad 
\phi_2(u)=(u,0)\\
&\rmD\times \rmU\ar[ruu]^{\phi_1}
&&\rmU\ar[luu]_{\phi_2}& \\
(\iota,\id)(u,v)=(u,v)\quad&&&&\quad\pr_1(u,v)=u\,.\\
&&\rmU\times \rmU\ar[ruu]_{\pr_1}\ar[luu]^{(\iota,\id)}&&
}
}
\end{align}

However, to view 
$\rmD\times \rmU$ as
a trivial $\rmU$-principal bundle, we need to gauge the diagonal 
action to the action on the right slot. This is achieved with
the help of the following homeomorphism intertwining these two actions:
\begin{align}
&\Psi : \rmD\times \rmU \lra \rmD\times \rmU,\quad \Psi(x,v):=(xv,v),
\quad \Psi(x,vu)=\Psi(x,v)\triangleleft u.
\end{align}
Let us denote the restriction of $\Psi$
to $\rmU\times\rmU$ by the same symbol. Now we can extend 
the above pushout diagram to the commutative diagram
\begin{align}\mbox{ } \label{dg} \\[-18pt]
\nonumber
\mbox{
\xymatrix@=-1mm@R=10mm{ 
&&&&\S^3&&&&\\
\rmD\times \rmU\ar[rrr]^{\Psi}&&&\rmD\times \rmU\ar[ru]^{\phi_1}
&&\rmU\ar[lu]_{\phi_2} &&&\ar[lll]_{\text{id}} \rmU\\
&\mbox{\ \quad \ }&\mbox{\ \quad \ }
&&\rmU\times \rmU\ar[ru]_{\pr_1}\ar[lu]^{(\iota,\id)}
&&\mbox{\ \quad \ }&\mbox{\ \quad \ }&\\
&&&&\ar[lllluu]^{(\iota,\id)}\rmU\times \rmU\ar[u]_{\Psi}\ar[rrrruu]_m\;,&&&&
}
}
\end{align}
where $m$ is the multiplication map. The outer diagram is again a
pushout diagram of $\rmU$-spaces, but now its defining $\rmU$-spaces
are trivial $\rmU$-principal bundles. It is the outer pushout diagram
that we shall use to analyse a noncommutative deformation of the
Hopf fibration.

\subsection{Pullback comodule algebra}\label{fpa}

We consider the  tensor products
$\lP_1:=\T\otimes \OU$, \mbox{$\lP_2=\C\otimes \OU\cong \OU$} and
$\lP_{12}:=\CS\otimes\OU$. These algebras are right $\OU{}$-comodule algebras 
for the coaction $x\otimes {\V}^{N}\mapsto x\otimes {\V}^{N}\otimes {\V}^{N}$,  $N\in\Z$.
Moreover, $\lP_1$ and $\lP_2$ are trivially principal with strong connections 
$\ell_i:\OU{}\ra \lP_i\otimes \lP_i$ given by
$\ell_i({\V}^{N})=(1\otimes {\V}^{N*})\otimes (1\otimes {\V}^{N})$, \,$i\hsp =\hsp 1,2$.
To construct a pullback of $\lP_1$ and $\lP_2$, we define 
the following morphisms of right $\OU{}$-comodule algebras:
\begin{align}
 &\pi_1:\T \otimes\OU \lra \CS\otimes\OU,\quad \pi_1(t\otimes w)=\sigma(t)\otimes w,\\
 &\pi_2:\OU\lra \CS\otimes\OU,\quad \pi_2(w)=\Delta(w).       
\label{WPhi}
\end{align}
Then the fibre product
$\PW:= \T\otimes \OU \times_{(\pi_1,\pi_2)} \OU$ defined by the pullback diagram
\begin{equation}    \label{fpSU2}
\xymatrix{
& \makebox[48pt][c]{$\T\otimes\OU \,{\underset{(\pi_1,\pi_2)}{\times}}\, \OU$}
\ar[dl]_{\mathrm{pr}_1} \ar[dr]^{\mathrm{pr}_2}& \\
\T \otimes\OU\ar[rd]_{\pi_1} &    &
\OU \ar[ld]^{\pi_2}\\
 &  \CS\otimes\OU&   }
\end{equation}
is a right $\OU$-comodule algebra. By Proposition~\ref{prop-pe}, it is principal. 

Furthermore, define unital respectively 
left colinear and right colinear splittings of  $\pi_1$  by 
\[
  \alpha^{1}_L(f\ot {\V}^{N}) = \alpha^{1}_R(f\ot {\V}^{N}) =T_f\ot {\V}^{N}, \quad 
  N\in\Z.
\]
Here $f\in \CS$ 
and $T_f$ denotes the Toeplitz operator with symbol $f$. 
In particular, $T_{u^N}=\S^N$ and $T_{u^{*N}}=\S^{*N}$. 
A right colinear splitting of the map  
$\pi_2:\OU\rightarrow \pi_2(\OU)$ is given by 
\[
\alpha^{2}_R ({\V}^{N}\ot {\V}^{N})= {\V}^{N},\quad N\in\Z. 
\] 
Inserting the definitions of  $\alpha^{1}_L$, $\alpha^{i}_R$ and $\ell_i$,  \,$i\hsp =\hsp 1,2$, 
into \eqref{alpha} and \eqref{key}, we obtain the following strong connection on $\PW$: 
\begin{align}
\ell({\U}^{N})&=({\S}^{*N}\otimes {\V}^{*N},{\V}^{*N} ) \otimes({\S}^{N}\otimes {\V}^{N},{\V}^{N}),
\label{luN} \\
 \ell({\U}^{*N})&=({\S}^{N}\otimes {\V}^{N},{\V}^{N} ) \otimes({\S}^{*N}\otimes {\V}^{*N},{\V}^{*N}) \notag \\
& \quad +((1-{\S}^{N}{\S}^{*N})\otimes {\V}^{N},0)\otimes
((1-{\S}^{N}{\S}^{*N})\otimes {\V}^{*N},0), \hspace{24pt} N\in\N.
\label{lu*N}
\end{align}

Note next that,
by construction, we have
\[
\PW= \Big\{ \mbox{$\sum_k$} ( t_k\otimes {\V}^{k}, \alpha_k {\V}^{k})\in \big(\T\otimes\OU\big) \times \OU \;\Big|\; 
\sigma(t_k) =  \alpha_k{\U}^{k} \Big\},
\]
where $\alpha_k\hsp\in\hsp\C$.
For ${}_{\C}\Delta(1)= \U^N\ot 1$, let 
\begin{equation}                                                             \label{LN}
L_N\,:= \, \PW\underset{\cO(\mathrm{U}(1))}{\square}\C \, \cong \,  \{ p\in \PW \;|\; \Delta_\PW(p)=p\otimes {\V}^{N}\}.
\end{equation}
Then $L_0 = \PW^{\co\cO(\mathrm{U}(1))}$,
each $L_N$ is a left $\PW^{\co\cO(\mathrm{U}(1))}$-module and
$\PW= \bigoplus{}_{N\in\Z}\, L_N$.
From
\[
\Delta_\PW\big(\sum_k ( t_k\hsp\otimes\hsp {\V}^{k}, \alpha_k {\V}^{k})\big)\hsp=\hsp
\sum_k ( t_k\hsp\otimes\hsp {\V}^{k}, \alpha_k {\V}^{k})\otimes {\V}^{k},
\]
it follows that
\[
L_N= \Big\{ ( t\otimes {\V}^{N}, \alpha {\V}^{N})\in \big(\T\otimes\OU\big) \times \OU \;\Big|\; 
\sigma(t) = \alpha\hs {\U}^{N}  \Big\}.
\]

The next proposition shows that $L_0\cong \T \times_{(\sigma,1)}\C$ is isomorphic
to the $C^*$-algebra of the standard Podle\'s sphere and that
%
\[   \label{isom}
L_N\cong \T \times_{({\U}^{-N}\sigma,1)}\C,
\]
where $\T \times_{({\U}^{N}\sigma,1)}\C$ is given by the pullback diagram
\begin{equation}\label{fpPodles}
\xymatrix{
& \T \,{\underset{({\U}^{-N}\sigma ,1)}{\times}}\, \C\ar[dl]_{\mathrm{pr}_1} \ar[dr]^{\mathrm{pr}_2}& \\
\T \ar[d]_{\sigma} &    &  \C \ar[d]^{\alpha \mapsto \alpha 1}\\
\CS \ar[rr]_{f\mapsto {\U}^{-N}f} & & \CS.   }
\end{equation}

\begin{prop}\label{CPodles}
The fibre product
$\T {\times}_{(\sigma,1)} \C$ is isomorphic to
the $C^*$-algebra $\CSq$, and
$L_N$  is isomorphic to  $\T \times_{({\U}^{-N}\sigma,1)}\C$
as a left $\CSq$-module with respect to
the left $\CSq$-action on $\T \times_{({\U}^{-N}\sigma,1)}\C$
given by $(t,\alpha)\cdot(h,\beta)=(th,\alpha\beta)$.
\end{prop}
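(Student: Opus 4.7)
The plan is to prove the two isomorphism claims in sequence; the first then supplies the left $\CSq$-module structure needed for the second.

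\textbf{Part 1: $\T \times_{(\sigma,1)} \C \cong \CSq$.} The fibre product consists of pairs $(t,\alpha)$ with $\sigma(t) = \alpha\cdot 1$, so the projection $\mathrm{pr}_1$ is an injective $*$-homomorphism onto the $C^*$-subalgebra $\sigma^{-1}(\C\cdot 1)\subseteq\T$. By exactness of the Toeplitz sequence \eqref{sesToeplitz}, this preimage equals $\cK(\lN) + \C\cdot I$, the unitization of $\cK(\lN)$. On the other hand, from the irreducible representations $\rho_+$ and $\rho_0$, the combined representation $\rho_+\oplus\rho_0:\CSq\to\B(\lN)\oplus\C$ is faithful, and its image is the unital $*$-subalgebra generated by $(\rho_+(A),0)$ and $(\rho_+(B),0)$; since $\rho_+(A)$ and $\rho_+(B)$ lie in $\cK(\lN)$, this image is also the unitization of $\cK(\lN)$. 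Matching the two identifications provides the desired $C^*$-algebra isomorphism, explicitly realizable by sending the generators $A, B\in\CSq$ to $(\rho_+(A),0), (\rho_+(B),0)\in\T\times_{(\sigma,1)}\C$ and verifying that the defining relations hold.

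\textbf{Part 2: the module isomorphism.} From \eqref{LN}, because $\Delta_P$ extracts the $v^N$-component, any element of $L_N$ has the form $(t\otimes v^N,\alpha v^N)$ with $(t,\alpha)\in\T\times\C$, and the pullback constraint $\Phi\circ\pi_1 = \pi_2$ becomes $\sigma(t)\U^N = \alpha\cdot 1$. Hence the assignment $\phi_N:(t\otimes v^N,\alpha v^N)\mapsto(t,\alpha)$ is a bijection onto $\T\times_{(\U^N\sigma,1)}\C$. For the $\CSq$-module structure, I would use Part 1 to identify $L_0\cong\CSq$ via $\phi_0$. Since multiplication in $P$ is componentwise, the product of $(s\otimes 1,\beta)\in L_0$ with $(t\otimes v^N,\alpha v^N)\in L_N$ is $(st\otimes v^N,\beta\alpha v^N)$, which $\phi_N$ carries to $(st,\beta\alpha) = (s,\beta).(t,\alpha)$ --- exactly the asserted action.

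The principal care required is in Part 1: the expression $\cK(\lN)\oplus\C$ of \eqref{CSq} must be interpreted as the unitization of $\cK(\lN)$, not as a coordinate-wise direct sum (the unitization contains no non-trivial central projection and so cannot split as such a sum). Once this is clear, every remaining step reduces to a direct verification.
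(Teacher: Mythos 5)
Your proof is correct and takes essentially the same route as the paper: you identify $\T{\times}_{(\sigma,1)}\C$ with $\cK(\lN)+\C\cdot 1\cong\CSq$ via the first projection (the paper just writes the inverse $\phi(k+\alpha)=(k+\alpha,\alpha)$ explicitly), and you use the same map $(t\otimes{\V}^N,\alpha{\V}^N)\mapsto(t,\alpha)$ with the componentwise module action, checking the intertwining directly where the paper cites the fibre-product module discussion of Section~1.2. Your reading of $\cK(\lN)\oplus\C$ as the minimal unitization inside $\B(\lN)$ is indeed the intended one.
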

\begin{proof}
For $N=0$, the mappings
$\T\ni t \mapsto \sigma(t)\in\CS$
and $\C\ni \alpha \mapsto \alpha 1\in\CS$  are  morphisms
of $C^*$-algebras, so that
$\T{\times}_{(\sigma,1)} \C$ is a $C^*$-algebra.
Next, recall that $\CSq\cong \cK(\lN) \oplus\C$ (see~\eqref{g}), and define
\begin{align}                                                             \label{Psi}
&\phi :\T\,{\underset{(\sigma,1)}{\times}}\hs\C\, \longrightarrow \,\cK(\lN) \oplus\C, \quad
\phi(t,\alpha)=t,\\                                                     \label{Phi}
&\phi^{-1} : \cK(\lN) \oplus\C\,\longrightarrow \,\T\,{\underset{(\sigma,1)}{\times}}\hs\C, \quad
\phi^{-1}(k+\alpha)=(k+\alpha\hs,\hs\alpha).
\end{align}
Clearly, $\phi:\T{\times}_{(\sigma,1)}\C\rightarrow \B(\lN) $ is a morphism of $C^*$-algebras. Since
$\phi(t,\alpha)=(t-\alpha)+\alpha$, and $\s(t-\alpha)=0$ by the pullback
diagram \eqref{fpPodles}, it follows from the short exact 
sequence~\eqref{sesToeplitz}
that $t-\alpha\in\cK(\lN)$, so that
$\phi(t,\alpha)\in \cK(\lN) \oplus\C$.
One easily sees that $\phi^{-1}$ is its inverse 
so that $\T{\times}_{(\sigma,1)}\C\cong \cK(\lN) \oplus\C$.

The fact that $\T {\times}_{({\U}^{-N}\sigma,1)} \C$ with the given $\CSq$-action
is a left $\CSq$-module
follows from the discussion preceding the pullback diagram \eqref{fpMod}
with the free rank 1 modules $E_1=\T$, $E_2=\C$ and
$\pi_{1\ast}E_1=\pi_{2\ast}E_2=\CS$.
Obviously,
$L_N \ni ( t\otimes {\V}^{N}, \alpha {\V}^{N})\mapsto (t,\alpha)\in \T {\times}_{({\U}^{-N}\sigma,1)}\C$
defines an isomorphism of left $\CSq$-modules.
\end{proof}

\subsection{Equivalence of the pullback and standard constructions}

Let us view $\mathrm{U}(1)$ as a compact quantum group. 
We consider its $C^*$-algebra $\CS$ of all continuous 
function together with the obvious coproduct, counit and 
antipode given by 
$\Delta(f)(x,y)=f(xy)$, $\vare(f)=f(1)$ and
$S(f)(x)=f(x^{-1})$, respectively. 
Furthermore, let $\barot$ stand for the 
completed tensor product of 
$C^*$-algebras.  In our case it is unique because of 
the nuclearity of the  involved $C^*$-algebras.  

Now let $\pi_2: \CS\ra \CS\barot\CS$ be given by the coproduct, i.e., 
$\pi_2(f)(x,y)=(\Delta f)(x,y)= f(xy)$, and let
$\sigma\otimes \id$ denote the tensor product of the
symbol map $\sigma:\T\ra\CS$ and $\id:\CS\ra\CS$.
Then $\barP := \T\barot \CS \times_{(\pi_1 ,\pi_2)}\CS$
is defined by the pullback diagram
\begin{equation}                                             \label{barP}
\xymatrix{
& \makebox[48pt][c]{$\T\barot\CS \,{\underset{(\pi_1 ,\pi_2)}{\times}}\, \CS$}
\ar[dl]_{\mathrm{pr}_1} \ar[dr]^{\mathrm{pr}_2}& \\
\T \barot\CS\ar[rd]_{\pi_1=\sigma\otimes\id\ \ } &    &  \CS \ar[ld]^{\ \pi_2=\Delta}\\
 &\CS\barot\CS\ & .  }
\end{equation}

With the $\CS$-coaction given by the coproduct $\Delta$
on the right tensor factor $\CS$, $\pi_1$ and $\pi_2$ are morphisms in
the category of right $\CS$-comodule $C^*$-algebras. 
Equivalently, we can view this diagram as a diagram in the category of 
$\mathrm{U}(1)$-$C^*$-algebras (see Section \ref{Pe}).
Therefore, 
$\barP $ inherits the structure of a right $\mathrm{U}(1)$-$C^*$-algebra.
To determine the Peter-Weyl comodule algebra  $\PWU(\barP)$,  
we first note that $\OU$ is 
the dense Hopf *-subalgebra of $\CS$ 
spanned by the matrix coefficients of the irreducible unitary corepresentations. 
Using the counit $\vare:\CS \ra \C$ and the fact that the 
Peter-Weyl functor commutes with taking pullbacks, 
we easily conclude that 
$\PWU(\barP)= \T\otimes \OU \times_{(\pi_1,\pi_2)} \OU$, 
so $\PWU(\barP)$ is the comodule algebra $\PW$ of 
Section~\ref{fpa}. 
 
Consider next the
\srep\ of $\SUq$ on $\lN$ given by \cite{vs88}
\begin{align}\label{repSUq}
\begin{split}
&\rho(\ha)e_n = (1-q^{2n})^{1/2} e_{n-1}, 
\quad \rho(\hb)e_n= -q^{n+1}e_n,\\
&\rho(\hc)e_n= q^{n}e_n, \quad \rho(\hd)e_n 
= (1-q^{2(n+1)})^{1/2} e_{n+1}.
\end{split}
\end{align}
Note that $\rho(\hb), \, \rho(\hc)\in\cK(\lN)$.
Comparing $\rho$ with the representation $\mu$ of $\Dq$ from \eqref{mu}, one readily
sees that $\rho(\SUq)\subseteq \T$. Furthermore, 
the symbol map $\s$ yields $\s(\rho(\hb))=\s(\rho(\hc))=0$.  
Using an appropriate diagonal compact operator $k$, we also obtain 
\[\s(\rho(\ha))=\s(\rho(\ha)-\S^* )+ \s(\S^*)= 
\s(k\S^* )+ \s(\S^*) ={\U}^{-1} ,\quad \s(\rho(\hd))=\s(\rho(\ha))^*={\U}. 
\]
Thus 
we obtain a $\rmU$-equivariant *-algebra morphism 
$\iota:\SUq\ra \PW$ by setting
\begin{equation}                                                        \label{i}
 \iota(\ha) = (\rho(\ha) \otimes {\V},{\V}),\quad \iota(\hc) = (\rho(\hc) \otimes {\V},0).
\end{equation}
One easily checks that the image of a Poincar\'e-Birkhoff-Witt basis of $\SUq$
remains linearly independent, so that $\iota$ is injective and we can consider $\SUq$
as a subalgebra of $\PW$. In particular, we have $\iota(M_{N})\subseteq L_N$ as
left $\Sq$-modules. (See Section~\ref{sPs} and Section~\ref{fpa} 
for the definitions of $M_N$
and $L_N$, respectively.)

The main objective of this section is to establish an $\rmU$-$C^*$-algebra
isomorphism between $\CSUq$ and $\barP $.
The universal $C^*$-algebra $\CSUq$ of $\SUq$ has been studied in
\cite{MNW} and \cite{W}.
Here we shall use the fact from \cite[Corollary 2.3]{MNW} that
a faithful *-representation $\hat\rho$ of $\CSUq$ on
the Hilbert space $\lN\barot\ell_2(\Z)$ is given by 
\begin{equation}                                         \label{concrete}
\hat\rho(\ha)(e_n\otimes b_k)= (1-q^{2n})^{1/2} e_{n-1}\otimes b_{k-1},\quad
\hat\rho(\hc)(e_n\otimes b_k)= q^{n}e_{n}\otimes b_{k-1},
\end{equation}
where $\{e_n\}_{n\in{\N}}$ and $\{b_k\}_{k\in\Z}$ denote the standard bases of
$\lN$ and $\ell_2(\Z)$, respectively. 
To compare \eqref{concrete} with \cite[Corollary 2.3]{MNW}, one has to apply 
the unitary transformation 
\[T: 
\lN\barot\ell_2(\Z)\ra \lN\barot\ell_2(\Z),
\quad T(e_n\otimes b_k)= e_n\otimes b_{k-n}.
\]
A right $\CS$-coaction on $\CSUq$ is
given by $(\id\ot\bar\pi)\circ \Delta$, where $\Delta$ denotes the coproduct of
the compact quantum group $\CSUq$ and $\bar\pi$ is the extension of the
Hopf *-algebra surjection $\pi:\SUq\ra \OU$ defined in \eqref{Hsur} 
to $\CSUq$. Using the faithfulness of $\hat\rho$, we can transfer $\bar\pi$ 
to $\hat\rho(\CSUq)$. 
In \cite{MNW}, it is shown that $\bar\pi$ gives rise to the 
$C^*$-algebra extension 
\begin{equation}                                             \label{C-ext}
\xymatrix{
0 \ar[r] & 
\ \cK(\lN)  \barot\CS\ \ar@{^{(}->}[r] &
\ \hat\rho(\CSUq)\ \ar[r]^{\quad \bar\pi}  &  \ \CS\  \ar[r] & 0 \,.  }
\end{equation}

\begin{thm}\label{thm}
The $\rmU$-$C^*$-algebras $\CSUq$ and $\barP $ are isomorphic.
\end{thm}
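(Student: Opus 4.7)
The strategy is to extend the algebraic injection $\iota:\SUq\to P$ of Section~\ref{fpa} to a $\CS$-comodule $C^*$-algebra isomorphism $\bar\iota:\CSUq\to\bar P$, using the Peter--Weyl decomposition together with Proposition~\ref{CPodles}.

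First, I would construct $\bar\iota$ from the universal $C^*$-property of $\CSUq$. The embedding $\iota$ realises $\SUq$ as a $*$-subalgebra of the $C^*$-algebra $\bar P$, whose elements are bounded in any faithful representation of $\bar P$, so $\iota$ extends uniquely to a continuous $*$-homomorphism $\bar\iota:\CSUq\to\bar P$. Since $\iota$ is $\OU$-coequivariant by construction (matching, on $P=PW_\delta(\bar P)$, the restriction of the $\CS$-coaction on $\bar P$ recalled in Section~\ref{Pe}), density of $\SUq$ in $\CSUq$ combined with continuity upgrades this to full $\CS$-coequivariance of $\bar\iota$.

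Next, for surjectivity, the image $\bar\iota(\CSUq)\subseteq\bar P$ is closed and contains $\iota(\SUq)$. Since $P=PW_\delta(\bar P)$ is dense in $\bar P$ and decomposes as $P=\bigoplus_{N\in\Z}L_N$ with $\iota(M_{-N})\subseteq L_N$, the task reduces to showing $L_N\subseteq\overline{\iota(M_{-N})}$ in $\bar P$ for every $N\in\Z$. The case $N=0$ is immediate from Proposition~\ref{CPodles}: the image $\iota(M_0)=\iota(\Sq)$ is dense in $L_0\cong\CSq$ because $\Sq$ is dense in $\CSq$. For $N\neq 0$, I would combine the finitely generated projective $\Sq$-module presentation $M_{2j}=\Sq^{2|j|+1}E_{2j}$ from Section~\ref{sPs} with the pullback description $L_N\cong\T\times_{(\U^N\sigma,1)}\C$ of Proposition~\ref{CPodles}, identifying both as the same $N$-th associated line bundle over the Podle\'s sphere and thereby matching their $C^*$-closures.

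For injectivity, I would exploit the $\CS$-coequivariance together with the Haar conditional expectation. The restriction of $\bar\iota$ to the coaction-invariant subalgebra $\CSq\subseteq\CSUq$ produces a $*$-homomorphism into $L_0\cong\CSq$ which, under the explicit identification of Proposition~\ref{CPodles}, acts as the identity on $\CSq$ and is therefore injective. The faithful conditional expectation $E:\CSUq\to\CSq$ obtained by integration along the $\mathrm{U}(1)$-coaction commutes with $\bar\iota$ by coequivariance; consequently, $\bar\iota(a)=0$ forces $\bar\iota(E(a^*a))=0$, whence $E(a^*a)=0$ by injectivity on $\CSq$, and hence $a=0$ by faithfulness of $E$. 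The principal obstacle is the surjectivity step for $N\neq 0$: one must verify that the $C^*$-closure in $\bar P$ of the algebraic line bundle $\iota(M_{-N})$ coincides with $L_N$, which requires a careful matching of the two presentations of the $N$-th associated line bundle over the standard Podle\'s sphere.
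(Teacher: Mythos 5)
Your overall architecture (extend $\iota$ by the universal property of $\CSUq$, prove injectivity by a faithful conditional expectation, prove surjectivity spectral subspace by spectral subspace) is a legitimate alternative to the paper's argument, which instead identifies $\bar P$ isometrically with a concrete $C^*$-algebra on $\lN\bar\otimes\ell_2(\Z)$ via $\mathrm{pr}_1$, invokes the faithful realization \eqref{concrete} of $\CSUq$ from \cite[Corollary 2.3]{MNW} to get $\CSUq\subseteq\mathrm{pr}_1(\bar P)$, and then obtains density of the image of $\SUq$ from Theorem~\ref{MGT} and Lemma~\ref{dense} applied to the polynomial pullback $P_0$. However, your surjectivity step for $N\neq 0$ is a genuine gap, not a proof: ``identifying both as the same $N$-th associated line bundle and thereby matching their $C^*$-closures'' is essentially the content of Corollary~\ref{cor1}, which the paper derives \emph{from} Theorem~\ref{thm}, so as stated your plan is close to circular, and the purely algebraic isomorphism $M_{2j}\cong\Sq^{2|j|+1}E_{2j}$ by itself says nothing about closures inside $\bar P$. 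What you actually need is $L_N\subseteq\overline{\iota(M_{-N})}$, and it can be proved directly: take $j=-N/2$, so that $t^{|j|}_{i,j}\in M_{-N}$ and $u_i:=\iota(t^{|j|}_{i,j})\in L_N$; by \eqref{matcoef} one has $\sum_i t^{|j|*}_{i,j}t^{|j|}_{i,j}=1$, hence every $x\in L_N$ satisfies $x=\sum_i (x u_i^{*}) u_i$ with $xu_i^{*}\in L_N\cdot L_{-N}\subseteq L_0$; combined with your $N=0$ step ($\overline{\iota(\Sq)}=L_0$) this gives $L_N\subseteq\overline{\iota(\SUq)}$, and then Peter--Weyl density of $P=\bigoplus_N L_N$ in $\bar P$ finishes surjectivity. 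Without some such argument your proof is incomplete precisely at the point you yourself flag as the principal obstacle.

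Two smaller points. First, both your $N=0$ step and your injectivity step lean on identifications that must be spelled out: that the isomorphism of Proposition~\ref{CPodles} carries $\iota(\Sq)$ onto the canonical dense copy of $\Sq$ in $\cK(\lN)\oplus\C$ (a short computation with $\iota(A)$ and $\iota(B)$, whose first components are $\rho_+(A)\otimes 1$ and $\rho_+(B)\otimes 1$), and, for the expectation argument, that the fixed-point algebra of the $\CS$-coaction on $\CSUq$ is the \emph{universal} $\CSq$ (equivalently, that the closure of $\Sq$ inside $\CSUq$ carries the universal norm) and that the $\mathrm{U}(1)$-averaging expectation is faithful. These facts are standard but are exactly what the paper's route obtains for free from the faithful concrete representation and the isometry of $\mathrm{pr}_1$; in your route they are load-bearing and should be justified. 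Second, your argument bypasses Theorem~\ref{MGT} and Lemma~\ref{dense} entirely, which is acceptable in principle, but it means the entire burden of density rests on the spectral-subspace argument above, so the $N\neq 0$ case cannot be left as a matching-of-line-bundles remark.
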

\begin{proof}
First note that $\ker (\mathrm{pr}_1)  = 
\{ (0,y)\in \barP \,|\, \pi_2(y)= \Delta(y) =0\}=\{0\}.$
Hence we can identify $\barP $ with the image of $\pr_1$ in $\T \barot\CS$. 
We will prove the theorem by applying the five lemma to the following 
commutative diagram of $\rmU$-$C^*$-algebras: 
\begin{equation}                                             \label{5lem}
\xymatrix{
0 \ar[r] & 
\ \cK(\lN)  \barot\CS\ \ar[d]^{\id} \ar@{^{(}->}[r] &
\ \hat\rho(\CSUq)\ \ar[r]^{\bar\pi} \ar[d]^\tau &  \ \CS\ \ar[d]^{\id}  \ar[r] & 0 \mbox{\hs\phantom{.}} \\
0 \ar[r] & 
\ \cK(\lN) \barot\CS\ \ar@{^{(}->}[r] &\ \pr_1(\barP )\  \ar[r]^{\omega}  & 
\ \CS\ \ar[r] & 0\;.   }
\end{equation}
To define $\tau$, 
we realize  $\CS$ as a concrete $C^*$-algebra of
bounded operators on $\ell_2(\Z)$ by setting ${\V}(b_k)=b_{k-1}$.
Then 
\begin{equation}                                               \label{hrho}
 \hat\rho(\ha) = \rho(\ha)\otimes {\V}
 = \pr_1(\rho(\ha)\otimes {\V},\V)\in \pr_1(\barP ), \quad
  \hat\rho(\hc)= \rho(\hc)\otimes {\V}
 = \pr_1(\rho(\hc)\otimes {\V},0)\in \pr_1(\barP ).
\end{equation}
Since $\CSUq$ is generated by $\ha$ and $\hc$, 
we take $\tau$ to be the inclusion $\hat\rho(\CSUq)\subset \pr_1(\barP )$. 

We define the $\rmU$-$C^*$-algebra morphism $\omega$ by 
\begin{equation}
\omega : \pr_1(\barP ) \lra \CS, \quad \omega := (\vare\circ\sigma)\otimes\id, 
\end{equation}
where $\vare$ denotes the counit of $\CS$.
The surjectivity of $\omega$ follows from 
$u^k = \omega(\rho(\ha^k)\otimes \V^k)$ and 
$u^{-k} = \omega(\rho(\ha^{*k})\otimes \V^{*k})$ for all $k\in\N$ and taking the closure 
of $\OU$ in $\CS$. 

To prove the exactness of the lower row, note that 
$\cK(\lN)  \barot\CS = \ker (\sigma) \barot \CS\subset \ker (\omega)$. 
Now let 
$f\in \pr_1(\barP )\setminus \ker (\sigma) \barot \CS$. Then 
$(\sigma\otimes\id)(f)\neq 0$.  By the commutative diagram \eqref{barP}, 
there exists a non-zero element $g\in \CS$ such that 
$(\sigma\otimes\id)(f)= \Delta(g)$. 
Hence $\omega(f)=  (\vare\ot\id)\circ \Delta (g)=g\neq 0$ which proves 
$\ker(\omega)= \cK(\lN)  \barot\CS$.  

It remains to show that the diagram \eqref{5lem} is commutative. This is clear for the left 
part since $\tau$ is just the inclusion. The commutativity of the right part follows from 
\begin{align}
&
\omega\Big( \tau \big(\hat\rho(\ha)\big)\Big)=
\vare\Big(\sigma\big(\rho(\ha)\big)\Big)\ot u=\vare( u)u= u =\bar\pi(\hat\rho(\ha)), \\
&\omega\Big( \tau \big(\hat\rho(\hc)\big)\Big)=
\vare\Big(\sigma\big(\rho(\hc)\big)\Big)\ot u= 0=\bar\pi(\hat\rho(\hc)) 
\end{align}
by taking limits since $\hat\ha$ and $\hat\hc$ generate $\CSUq$. Therefore, by the five lemma, 
$\tau$ is an isomorphism of $\rmU$-$C^*$-algebra. 
\end{proof}

By the final remark of Section~\ref{Pe}, 
we conclude from Theorem \ref{thm} that the Peter-Weyl comodule
algebras $\PWU(\CSUq)$ and $\PW$ are isomorphic. We use this
isomorphism to identify associated projective
modules.
For $N\in\Z$ and the left \OU-coaction on $\C$ given by 
${}_{\C}\Delta(1)= \U^N\ot 1$, we
define a ``completed" version of $M_N$ (see \eqref{MsubN}):
\[\label{projN}
\bar M_N:=  \PWU(\CSUq) \underset{\mathcal{O}(\mathrm{U}(1))}{\square}\C     
=\{ p\in\PWU(\CSUq) \;|\; ((\id\ot\bar\pi)\circ \Delta)(p)=p\ot {\U}^{N}\}.
\]
Now it follows from Equation~\eqref{LN} that 
$\bar M_N \cong L_N$. 
Applying the same arguments as at the end of Section~\ref{sPs}, 
we infer that $\bar M_N\, \cong\, \CSq^{N+1} E_N\,$,  
with $E_N$ being the projection matrix of Equation~\eqref{EN}. 
Taking advantage of these isomorphisms of modules, we prove:
\begin{lem}\label{project}
Identifying $\CSq$ with $\cK(\lN) \oplus\C$, 
we obtain the following isomorphisms of left $\CSq$-modules: 
\begin{align}
&\CSq^{N+1} E_N \cong \CSq\hs p_N,  & 
p_{N}&:= {\S}^{N}{\S}^{N*},  & & N\geq 0, \\
&\CSq^{|N|+1} E_N \cong \CSq^2 p_N, &
p_{N}&:= \left(
  \begin{array}{cc}
  1 &  0 \cr
  0 & 1-{\S}^{|N|}{\S}^{|N|*}
  \end{array}
  \right), && N<0.
  \label{pn-}
\end{align}
\end{lem}
\begin{proof} 
We apply \eqref{ie} to construct  projections 
$P_{N}$, $N\in\mathbb{Z}$,
from the strong connection given in \eqref{luN} 
and~\eqref{lu*N}.  
For $N<0$, we obtain
\begin{align}
(P_N)_{11} &= ({\S}^{*|N|}\otimes {\V}^{N},{\V}^{N})({\S}^{|N|}\otimes {\V}^{|N|},{\V}^{|N|} ) = (1\ot 1,1), \\
(P_N)_{12} & =(P_N)_{21}^*= ({\S}^{*|N|}\otimes {\V}^{N},{\V}^{N})
((1-{\S}^{|N|}{\S}^{*|N|})\otimes {\V}^{|N|},0)=0, \\
(P_N)_{22} & = ((1-{\S}^{|N|}{\S}^{*|N|})\otimes {\V}^{N},0)((1-{\S}^{|N|}{\S}^{*|N|})
\otimes {\V}^{|N|},0) = ((1-{\S}^{|N|}{\S}^{*|N|})\otimes 1,0). 
\end{align}
Analogously, for $N\geq0$, we get
\[
(P_{N})_{11} = ({\S}^{N}\otimes {\V}^{N},{\V}^{N})({\S}^{*N}\otimes {\V}^{*N},{\V}^{*N} )= 
({\S}^{N}{\S}^{*N}\otimes 1,1).
\]
Finally, applying the isomorphism \eqref{Psi} componentwise to 
$P_N$, $N\in\mathbb{Z}$,
yields the result.
\end{proof}

The projections $p_N$ of Lemma \ref{project} can also be obtained 
from the odd-to-even construction in Section~\ref{sec-mv}. 
First let $N<0$. Since $L_N\cong T {\times}_{({\U}^{-N}\sigma,1)} \C$
(see~\eqref{isom}),   
we can apply the formula \eqref{p} by taking
$E_1=\T$, $E_2=\C$,
$\pi_{1\ast}E_1=\pi_{2\ast}E_2=\CS$, and choosing 
$h$ in \eqref{fpMod} to be the isomorphism given by the
multiplication with ${\U}^{|N|}$. 
As the symbol map $\s$ applied to ${\S}$ is ${\U}$
(see~\eqref{sesToeplitz}), we can lift
${\U}^{|N|}$ and its inverse ${\U}^{-|N|}$  to
${\S}^{|N|}$ and ${\S}^{|N|*}$ respectively.
Inserting  ${\Cc}= {\S}^{|N|*}$ and ${\Dd}\hsp=\hsp {\S}^{|N|}$ into \eqref{p}
gives $\T {\times}_{({\U}^{-N}\sigma,1)} \C\cong (\T {\times}_{(\sigma,1)} \C)^{2}\hs Q_{N}$, where
\begin{equation}                                                    \label{P-N}
Q_{N}= \left(
  \begin{array}{cc}
  (1,1) &  (0,0) \cr
  (0,0) & (1-{\S}^{|N|}{\S}^{|N|*},0)
  \end{array}
  \right).
\end{equation}
Finally, 
applying the isomorphism \eqref{Psi} yields the projection 
in~\eqref{pn-}.
Similarly, for $N\geq 0$, we insert ${\Cc}={\S}^{N}$ and ${\Dd}={\S}^{N*}$ into \eqref{p}. 
Since ${\S}^{N*}{\S}^{N}=1$, we obtain
$\T {\times}_{({\U}^{-N}\sigma,1)} \C\cong (\T {\times}_{(\sigma,1)} \C)^{2}Q_{N}$ with
\begin{equation}                                                     \label{PN}
 Q_{N}=
  \left(
  \begin{array}{cc}
  ({\S}^{N}{\S}^{N*},1 ) & (0, 0) \cr
  (0, 0) & (0, 0 )
  \end{array}
  \right),
\end{equation}
which is equivalent to
$\T {\times}_{({\U}^{-N}\sigma,1)} \C\cong  \CSq\hs {\S}^{N}{\S}^{N*}$.

\subsection{Index pairing}

Recall that for a $C^*$-algebra $A$, a projection $p\in\Mat_n(A)$,
  and *-representations $\rho_+$ and $\rho_-$ 
of $A$ on a Hilbert space $\H$ such that
$[(\rho_+,\rho_-)]\in {\K}^{0}(A)$ (e.g.\ see \cite[Chapter~4]{c-a94}), 
one has the following: 
If the operator $\tr_{\Mat_n} (\rho_+-\rho_-)(p)$ is trace class, 
then the formula
\begin{equation}\label{CCpair}
\langle [(\rho_+,\rho_-)],[p]\rangle
=\tr_\H (\tr_{\Mat_n} (\rho_+-\rho_-)(p))
\end{equation}
yields a pairing between ${\K}^{0}(A)$ and ${\K}_{0}(A)$.

In this section, we compute the pairing between the $\K_0$-classes of the
projective $\CSq$-modules describing quantum line bundles and the two 
generators of ${\K}^{0}(A)$.
By Lemma~\ref{project},  
we can take the projections $p_{N}$ as representatives of respective
$\K_0$-classes.
Their simple form makes it very easy to compute
the index pairing.

\begin{thm}\label{CCP}
Let $\bar{M}_N$ be the associated modules of~\eqref{projN}, and let 
$[(\id,\vare)]$ and $[(\vare,\vare_0)]$ denote the generators of 
${\K}^{0}(\CSq)$
given in Section~\ref{sPs}.
Then, for all $N\in\Z$,
\begin{equation}
\langle [(\vare,\vare_0)],[\bar{M}_N]\rangle=1,
\qquad  \langle [(\id,\vare)],[\bar{M}_N]\rangle=-N.
\end{equation}
\end{thm}
\begin{proof}
Let $N\geq 0$. Then $p_N={\S}^{N}{\S}^{N*}=({\S}^{N}{\S}^{N*}-1)+1$, so that
$\vare(p_N)=1$ and $\vare_0(p_N)={\S}{\S}^{*}$. Furthermore, since
for any $N\in\N\!\setminus\!\{0\}$, the image of the projection
$1-{\S}^{N}{\S}^{N*}$ is  
$\lin\{e_0,\ldots,e_{N-1}\}\subset\lN$, the projection
$1-{\S}^{N}{\S}^{N*}$ is  trace class. Moreover, with the help
of Lemma~\ref{project} and Formula~\eqref{CCpair}, it implies that
\begin{align}
&\langle [(\vare,\vare_0)],[\bar{M}_N]\rangle=\tr_{{\lN}} (\vare-\vare_0)(p_N)
=\tr_{{\lN}}(1-{\S}{\S}^{*})=1,\\
&\langle [(\id,\vare)],[\bar{M}_N]\rangle=\tr_{{\lN}} (\id-\vare)(p_N)
=\tr_{{\lN}}({\S}^{N}{\S}^{N*}-1)=-N.
\end{align}

For $N<0$, we have $\tr_{\Mat_2}(p_N)=2-{\S}^{|N|}{\S}^{|N|*}=2-p_{|N|}$.
Combining this with  the above index pairing for $p_{|N|}$,
the formulas $(\vare-\vare_0)(2)=2(1-{\S}{\S}^{*})$
and $(\id-\vare)(2)=0$, and  \eqref{CCpair}, we obtain
\begin{align}
\langle [(\vare,\vare_0)],[\bar{M}_N]\rangle & = \tr_{{\lN}} (\vare - \vare_0)(2 - p_{|N|})
 = \tr_{{\lN}}(1 - {\S}{\S}^{*}) = 1,\\
\langle [(\id,\vare)],[\bar{M}_N]\rangle & = \tr_{{\lN}} (\id - \vare)(2 - p_{|N|})
 = \tr_{{\lN}}(1 - {\S}^{|N|}{\S}^{|N|*}) = -N.
\end{align}
This completes the proof.
\end{proof}

The above theorem agrees with the classical situation. Indeed,
the pairing $\langle [(\vare,\vare_0)],[\bar{M}_N]\rangle$ yields 
the rank of the line bundles,
and $\langle [(\id,\vare)],[\bar{M}_N]\rangle$
computes the winding number of
 the map ${\U}^{-N}: \mathrm{S}^1\ra\mathrm{S}^1$.

\section{Examples of piecewise principal coalgebra coactions}

We begin by recalling the piecewise structure \cite{hkmz} of  a 
noncommutative join construction proposed by \cite{dhh}. 
Then we instantiate it to $\mathrm{SU}_q(2)$ to obtain a 
quantum instanton bundle 
$\mathrm{S}^7_q\rightarrow \mathrm{S}^4_q$  \cite{p-mj94} 
as a piecewise trivial principal comodule algebra. 
A key step is then to replace $\mathrm{SU}_q(2)$ by 
quotienting the Hopf algebra $\SUq$ by a coideal 
right ideal $(\cO(\mathrm{S}_{q,s}^2)\cap \mathrm{ker}\hs \vare)\SUq$ 
provided by the generic Podle\'s quantum spheres $\mathrm{S}_{q,s}^2$, 
$s\neq 0$ \cite{p-p87}.
The quotient coalgebra is isomorphic with $\OU$ \cite{MS}. 
Applying our main theorem, we will prove that the induced 
right coaction of $\OU$ is principal.

\subsection{Piecewise principal coactions from a noncommutative
join construction}

Let $\bar H$ be the $C^*$-algebra of a compact quantum group and 
$H$ its Peter-Weyl Hopf algebra \cite{w-sl87,w-sl98}. 
We take the algebra of norm continuous functions 
$C([a,b],\bar H)$ from a closed interval $[a,b]$ 
to the $C^*$-algebra $\bar H$, and 
define 
\begin{eqnarray}
&& P_1:=\{f \in C([0,\mbox{$\frac{1}{2}$}],\bar H) \otimes H\,|\,
	f(0) \in \Delta(H)\},\\ 
&& P_2:=\{f \in C([\mbox{$\frac{1}{2}$},1],\bar H) \otimes H\,|\,
	f(1) \in \mathbb{C} \otimes H\}.
\end{eqnarray} 
Here 
we identify elements of 
$C([a,b],\bar H) \otimes H$ with functions 
$[a,b] \rightarrow \bar H \otimes H$. The $P_i$'s 
are right 
$H$-comodule algebras for the coaction 
$ \Delta_{P_i}=\mathrm{id}_{C([a_i,b_i],\bar H)} \otimes \Delta$, 
where $\Delta$ stands for the coproduct of $H$. 
The subalgebras of coaction invariants can be identified with 
\begin{eqnarray}
&& B_1:=\{f \in C([0,\mbox{$\frac{1}{2}$}],\bar H)\,|\,f(0) \in \mathbb{C}\}, 
		  \nonumber\\ 
&& B_2:=\{f \in C([\mbox{$\frac{1}{2}$},1],\bar H)\,|\,f(1) \in \mathbb{C}\}.
		  \nonumber 
\end{eqnarray}

The comodule algebra $P_2$ is evidently the same as 
$B_2\ot H$. Unlike $P_2$, the comodule algebra $P_1$ 
does not coincide with $B_1\ot H$. However, there is 
a cleaving map 
$j:H\rightarrow P_1$ by $j(h)(t):= \big(t\mapsto h_{(1)}\big)\ot h_{(2)}$, 
that is, $j(h)(t):=\Delta(h)$ for all $t\in[0,\mbox{$\frac{1}{2}$}]$. 
Since $j$ is an algebra homomorphism, 
it identifies the comodule algebra $P_1$ with a 
smash product $B_1\# H$. 

Now one can define the noncommutative join of $\bar H$ 
as the pullback right $H$-comodule algebra 
\[
	P:=\{(p,q) \in P_1 \oplus P_2\,|\,
	\pi_1(p)=\pi_2(q)\}
\]
given by 
the evaluation maps 
\[
\pi_1:=\mathrm{ev}_{\frac{1}{2}}\ot \id  : P_1 \rightarrow P_{12}:=\bar H\ot H,\quad
	\pi_2:=\mathrm{ev}_{\frac{1}{2}}\ot \id : P_2 \rightarrow P_{12}:=\bar H\ot H,
\]
where $\mathrm{ev}_{t}$ is given by the evaluation of functions of $C([a,b],\bar H)$ 
in $t\in[a,b]$.

Our goal now is to replace $H$ by a quotient coalgebra without loosing principality. 
Using \cite[Example 2.29]{bhems}, it is straightforward to verify the following lemma.

\begin{lem}  \label{lem}
Let $H$ be a Hopf algebra with bijective antipode, 
let $\Delta_P: P\rightarrow P\ot H$ be a coaction making 
$P$ a right $H$-comodule algebra 
and $J$ a coideal right ideal of $H$. Then $C:= H/J$ is a coalgebra coacting on $P$ via 
$\rho_R := (\id\ot \pi)\circ \Delta_P$, \,$\pi:H\rightarrow C$ the canonical surjection, 
and the formula 
\[
\Psi : C\ot P\ni \bar \pi(h)\ot p \longmapsto p_{(0)} \ot \pi(hp_{(1)})\in P\ot C
\]
defines a bijective entwining making $P$ an entwined module. 
The inverse of $\Psi$ is given by 
\[ 
\Psi^{-1} (p\ot \pi(h))= \pi(h S^{-1}(p_{(1)}))\ot p_{(0)},
\]
and defines a left oaction on $P$ via 
\[ \label{inverse}
\rho_L: P\ni p \longmapsto \Psi^{-1}(p\ot \pi(1))=\pi(S^{-1}(p_{(1)} ) \ot p_{(0)} \in C\ot P.
\]
\end{lem}

\begin{lem} \label{princ}
Let $P$ be a principal $H$-comodule algebra 
for $\Delta_P: P\rightarrow P\ot H$. 
Also, let $J$ be a coideal right ideal of $H$ 
defining a coalgebra $C:= H/J$, let 
$\rho_R := (\id\ot \pi)\circ \Delta_P$, \,$\pi:H\rightarrow C$ the canonical surjection,  
be its right coaction on $P$, and let 
$i:C\rightarrow H$ be a unital (i.e., $i(\pi(1))=1$)    
$C$-bicolinear  (for the coactions $\Delta_H:= (\id\ot\pi)\circ\Delta$ 
and ${}_H\Delta:= (\pi\ot\id)\circ\Delta$) 
splitting (i.e., $\pi\circ i=\id$). 
Then $P$ is principal for the coaction $\rho_R$.
\end{lem}

\begin{proof}
Let $\ell:H\rightarrow P\ot P$ be a strong connection on $P$. 
One can easily check that $\ell\circ i:C\rightarrow P\ot P$ is a 
strong connection on $P$ for the right coaction 
$\rho_R := (\id\ot \pi)\circ \Delta_P$ and the left coaction 
$\rho_L := (\pi\ot\id)\circ {}_P\Delta$, where 
${}_P\Delta(p)= S^{-1}(p_{(1)})\ot p_{(0)}$ is the left $H$-coaction on $P$ 
viewed as a principal $H$-comodule algebra. 
On the other hand, it follows from Lemma \ref{lem} that 
$\Psi : C\ot P\ni \pi(h)\ot p \mapsto p_{(0)} \ot \pi(hp_{(1)})\in P\ot C$ 
is a bijective entwining making $P$ an entwined module. 
Therefore, since $\rho_R(1)=1\ot \pi(1)$, 
$\rho_L(1)= \pi(1)\ot 1$, and 
$\rho_L(p)= \Psi^{-1}(p\ot \pi(1))$ for all $p\in P$ 
by \eqref{inverse},
the principality of $P$ for the $C$-coaction $\rho_R$ 
follows from Lemma \ref{L1}.
\end{proof}

Combining Lemma~\ref{princ} with Theorem~\ref{prop-pe} 
yields the following result. 
\begin{thm} 
Let $\bar H$ be the $C^*$-algebra of a compact quantum group, 
$H$ its Peter-Weyl Hopf algebra, $J$ a coideal right ideal of $H$ 
and $\pi : H\rightarrow C:= H/J$ the cannonical surjection. 
Also let 
\begin{align}
&P_1:=\{f \in C([0,\mbox{$\frac{1}{2}$}],\bar H) \otimes H\,|\, (\mathrm{ev}_0\ot \id)(f) \in \Delta(H)\},\\
&P_2:=\{f \in C([\mbox{$\frac{1}{2}$},1],\bar H) \otimes H\,|\,
	(\mathrm{ev}_1\ot \id)(f) \in \mathbb{C} \otimes H\}
\end{align}
be right and left $C$-comodules for the right and left coactions 
\[
\rho^i_R:= (\id\ot\pi)\circ \Delta_{P_i}, \quad 
\rho^i_L:= (\pi\ot\id)\circ {}_{P_i}\Delta,\quad i=1,2,
\]
respectively. 
Here $\Delta_{P_i}:=\id\ot\Delta$ and 
${}_{P_i}\Delta:= (S^{-1}\ot\id)\circ\mathrm{flip}\circ \Delta_{P_i}$. 
Then, if there exists a unital bicolinear splitting $i: C\rightarrow H$ 
of $\pi : H\rightarrow C$, the pullback $C$-comodule 
\[
P:=\{ (p_1,p_2)\in P_1\times P_2\;|\; (\mathrm{ev}_{\frac{1}{2}} \ot\id)(p_1)
= (\mathrm{ev}_{\frac{1}{2}} \ot\id)(p_2)\}
\]
is principal.
\end{thm}

Let us now take a closer look 
at the compatibility of strong connections on 
principal comodules as appearing in the above theorem. 
First we observe that, if both of $\pi_1$ and $\pi_2$ 
defining the pullback diagram \eqref{dia} are surjective, 
then \eqref{key} 
simplifies to 
\[\label{ke}
\ell\;=\;\llp(\alpha^{12}_L+\id)\otimes(\alpha^{12}_R+\id)\lrp\circ\ell_2
+\;(\eta_1\circ\he-
m_{P_1}\circ(\alpha^{12}_L\otimes\alpha^{12}_R)\circ\ell_2)
*\big(\llp\id\otimes(\id+\alpha^{21}_R)\lrp\circ
\ell_1 
\big). 
\]
Indeed, since now $\ha_R^{21}$ is defined on the whole $P_1$, 
a special constructed connection 
$\tilde\ell_1$ in \eqref{elll} can be replaced 
by any strong connection $\ell_1$ on $P_1$. 
Note that specializing \eqref{ke} to comodule algebras 
coincides with what was obtained in \cite{hkmz}. 

Next, we observe that the formulae 
\begin{align}
&\ha^1 :   P_{12} \rightarrow P_1,\quad  \ha^1(\bar h\ot h)= 
2t\hs\bar h\ot h + (1-2t)\hs \bar\vare(\bar h)\hs h_{(1)}\ot h_{(2)},\\
&\ha^2 : P_{12} \rightarrow P_2,\quad \ha^2(\bar h\ot h)= 
2(1-t)\hs \bar h \ot h + (2t-1)\hs \bar\vare(\bar h) \ot h,
\end{align}
where $\bar\vare$ is any unital linear functional on $\bar H$,
define unital $C$-bicolinear splittings of $\pi_1$ and $\pi_2$, respectively. 
Hence we can take $\ha_L^{12}=\ha_R^{12}=\ha^{1}\circ \pi_2$ 
and $\ha_R^{21}=\ha^{2}\circ \pi_1$. 
Combining this with the fact that a cleaving map $j$ 
defines a strong connection via 
$\ell:=(j^{-1}\ot j)\circ \Delta$, we obtain very explicit formulae for 
strong connections on $P_1$ and $P_2$: 
\[
\ell_1:= (j_1^{-1}\ot j_1)\circ \Delta\circ i, \quad \ell_2:= (j_2^{-1}\ot j_2)\circ \Delta\circ i.
\]
Here $j_1:H\rightarrow P_1$, $j_1(h):=(t\mapsto h_{(1)})\ot h_{(2)}$, 
and $j_2:H\rightarrow P_2$, $j_2(h):=1\ot h$ are cleaving maps 
for $P_1$ and $P_2$, respectively.

\subsection{Quantum complex projective spaces $\mathbb{C}\mathrm{P}^3_{q,s}$}

Finally, we instantiate $\bar H$ to be $\CSUq$, \,$H=\SUq$, 
$J=(\cO(\mathrm{S}_{q,s}^2)\cap \mathrm{ker}\hs \vare)\SUq$, 
and $\bar\vare : \CSUq \rightarrow C$ to the counit. 
Here $\cO(\mathrm{S}_{q,s}^2)$ stands for the coordinate algebra 
of a Podle\'s quantum sphere $\mathrm{S}_{q,s}^2$, 
$s\in[0,1]$, \cite{BM}. 
(Note that the case $s=0$ brings us to the comodule-algebra 
setting.) 
The most interesting part of this structure is 
the unital bicolinear splitting of 
$\pi : \SUq\rightarrow \SUq/J$ given by \cite[Proposition~6.3]{BM}. 

All this defines a family of noncommutative deformations 
of the $\mathrm{U}(1)$-principal bundle 
\mbox{$\mathrm{S}^7\rightarrow \C\mathrm{P}^3$}. 
More precisely, we obtain deformations of a 
$\mathrm{U}(1)$-principal action on $\mathrm{S}^7$ 
given by 
\[
(z_1,z_2,z_3,z_4)\mathrm{e}^{\im \varphi}= 
(z_1\mathrm{e}^{\im \varphi},z_2\mathrm{e}^{-\im \varphi},
z_3\mathrm{e}^{\im \varphi},z_4\mathrm{e}^{-\im \varphi}), \quad 
|z_1|^2+|z_2|^2+|z_3|^2+|z_4|^2=1. 
\]
However, this is isomorphic with the diagonal action of 
$\mathrm{U}(1)$ on $\mathrm{S}^7$, so that the quotient space 
is again $\C\mathrm{P}^3$. 
Thus out of Pflaum's $\mathrm{S}^7$ we obtain a family of quantum 
projective spaces $\C\mathrm{P}^3_{q,s}$. 
A very explicit Mayer-Vietoris type formula for a strong connection on 
$\mathrm{S}^7_q\rightarrow \C\mathrm{P}^3_{q,s}$ 
should allow us to study the $K$-theory aspects of the tautological 
line bundle over $\C\mathrm{P}^3_{q,s}$, 
but this is beyond the scope of this paper.

\section*{Acknowledgements} The authors gratefully acknowledge financial
support from the following research grants: \linebreak[4]
PIRSES-GA-2008-230836,
1261\slash 7.PR~UE\slash 2009\slash 7, 
N201 1770 33,
and 189/6.PR~UE/2007/7. It is also a pleasure to thank Tomasz Brzezi\'nski
for helping us with Section~2, and Ulrich Kr\"{a}hmer for proofreading
the manuscript.

\newpage

\end{document}